\providecommand{\ar}{\arrow}
\providecommand{\cal}{\mathcal}
\renewcommand{\Bbb}{\mathbb}
\newenvironment{pf}{\begin{proof}}{\end{proof}}
\newcommand{\Tee}{{\cal{T}}}
\newcommand{\Ef}{{\cal{F}}}
\newcommand{\Gee}{{\cal{G}}}
\newcommand{\Kay}{{\cal{K}}}
\newcommand{\Pee}{{\cal{P}}}
\newcommand{\Zee}{{\Bbb{Z}}}
\newcommand{\Qyu}{{\Bbb{Q}}}
\renewcommand{\phi}{\varphi}
\renewcommand{\rho}{\varrho}
\newcommand{\rest}{\restriction}
\newcommand{\ntr}{{n\in\omega}}
\newcommand{\loe}{\leqslant}
\newcommand{\subs}{\subseteq}
\newcommand{\sups}{\supseteq}
\renewcommand{\iff}{\Longleftrightarrow}
\newcommand{\map}[3]{#1\colon #2 \to #3} 
\newcommand{\fra}{Fra\"iss\'e}
\providecommand{\nat}{\omega}
\newcommand{\cmp}{\circ} 
\newcommand{\separator}{\begin{center}***\end{center}}
\newcommand{\proto}[1]{{\mathbb S_\kappa}}
\newtheorem{tw}{Theorem}[section]
\newtheorem{lm}[tw]{Lemma}
\newtheorem{prop}[tw]{Proposition}
\newtheorem{claim}[tw]{Claim}
\theoremstyle{definition}
\newtheorem{df}[tw]{Definition}
\newtheorem{ex}[tw]{Example}
\newtheorem{question}[tw]{Question}
\theoremstyle{remark}
\newcommand{\Age}[1]{\operatorname{Age}(#1)}
\newcommand{\T}{{\Bbb{T}}}
\title{Examples of weak amalgamation classes}
\author{{\sc Adam Krawczyk}\\
	{\small Insititute of Mathematics}\\
	{\small University of Warsaw}\\
	{\small Warsaw, Poland}
\and
{\sc Alex Kruckman}\\
{\small Wesleyan University}\\
{\small Middletown, CT, USA}
\and
{\sc Wies{\l}aw Kubi\'s}\footnote{
	Research of W. Kubi\'s supported by GA\v CR grant EXPRO 20-31529X.}\\
	{\small Institute of Mathematics}\\
	{\small Czech Academy of Sciences}\\
	{\small Prague, Czechia}
\and
{\sc Aristotelis Panagiotopoulos}\\
{\small  Institut f\"ur Mathematische Logik und Grundlagenforschung}\\
{\small  Wilhelms-Universit\"at M\"unster}\\
{\small M\"unster, Germany}
}
\date{\today\ \clocktime}
\begin{document}

\maketitle

\begin{abstract}
We present several examples of hereditary classes of finite structures satisfying the joint embedding property and the weak amalgamation property, but failing the cofinal amalgamation property. These include a continuum-sized family of classes of finite undirected graphs, as well as an example due to Pouzet with countably categorical generic limit. 

\ 

\noindent 
MSC (2010):
03C07,  
03C50.  

\ 

\noindent
Keywords: Weak amalgamation property, \fra\ class, homogeneous structure.
\end{abstract}


\section{Introduction}\label{sec:intro}

In classical \fra\ theory (see e.g.~\cite[Section 7.1]{Hodges}), the basic objects of interest are \fra\ classes: hereditary classes of finitely generated structures which are countable up to isomorphism and satisfy the joint embedding property and the amalgamation property. Any such class $\mathcal{K}$ is associated to a countable homogeneous structure $\mathcal{M}$, its \emph{\fra\ limit}, which is distinguished up to isomorphism as the unique countable homogeneous structure with age $\mathcal{K}$ (the \emph{age} of a structure $\mathcal{M}$ is the class of isomorphism types of all finitely generated substructures of $\mathcal{M}$).

The \fra\ limit of $\mathcal{K}$ is also generic among countable structures with age contained in $\mathcal{K}$. The precise meaning of generic can be explained in multiple equivalent ways: topologically (the isomorphism class of $\mathcal{M}$ is comeager in an appropriate space of structures)~\cite{PouzetRoux, Cam}, game-theoretically~\cite{KKgames}, or via forcing~\cite{Ziemek}.

It turns out that the full strength of the amalgamation property is not necessary for the existence of a generic structure with age $\mathcal{K}$; a condition called the \emph{weak amalgamation property} suffices and in fact characterizes the existence of a generic structure. The weak amalgamation property was introduced by Ivanov~\cite{Ivanov} (under the name ``almost amalgamation property'') and independently by Kechris and Rosendal~\cite{KecRos}, during the study of generic automorphisms of $\omega$-categorical structures and \fra\ limits.
More recent expositions can be found in~\cite{KKgames} and \cite{KruckPhD}. See also~\cite{KubisWFL} and \cite{DiLiberti} for a purely category-theoretic approach.

Corresponding to this weakening of amalgamation is a weakening of homogeneity: the generic limit, or weak \fra\ limit, of a weak \fra\ class $\mathcal{K}$ is characterized as the unique countable \emph{weakly} homogeneous structure with age $\mathcal{K}$. Weakly homogeneous structures were first studied by Pabion~\cite{Pabion}, who called them prehomogeneous; however, he did not define the weak amalgamation property explicitly. 

Most natural examples of classes with the weak amalgamation property actually satisfy an intermediate condition, the \emph{cofinal amalgamation property}. This property was isolated earlier, by Calais~\cite{Calais}, who also studied the generic limits of classes with the cofinal amalgamation property. Calais called these limits pseudo-homogeneous, but we prefer the term \emph{cofinally homogeneous}. 
The cofinal amalgamation property was also rediscovered by Truss in his work on generic automorphisms~\cite{Truss}.  

In this note, we present several examples of classes of finite structures with the weak amalgamation property but without the cofinal amalgamation property. Each example is a hereditary class in a finite relational language and satisfies the joint embedding property. After making the definitions precise in Section~\ref{sec:defs}, we give an example in Section~\ref{sec:alex} of a class of undirected vertex-colored graphs, which appeared previously in the second-named author's PhD thesis~\cite{KruckPhD}. In Section~\ref{sec:aris}, we give an example of a class of directed edge-colored graphs. The ideas in these examples are refined in Section~\ref{sec:KK} to give an example of a class of undirected graphs, and expanded in Section~\ref{sec:KK2} to a continuum-sized family of examples. Finally, in Section~\ref{sec:pouzet}, we first recall the main aspects of the theory of generic limits of weak \fra\ classes (including weak homogeneity) and next we rescue from obscurity a nice example from~\cite{Pabion}, attributed to Pouzet. This example, unlike our others, has a generic limit with a countably categorical theory. We use this example to answer a question of Ahlman, and we pose a question of our own. 

\tableofcontents

\section{Weak amalgamations}\label{sec:defs}


Throughout this note we consider structures in a countable first-order language. In all of our examples, the language will be finite and relational.

\begin{df}\label{DfWAPs}
	Let $\Ef$ be a class of finitely generated structures.
	\begin{itemize}[itemsep=0pt]
	\item  We say that $\Ef$ has the \emph{hereditary property} (also called \emph{hereditary}), if for every embedding $f\colon Z\to Y$ between structures, if $Y\in \Ef$ then $Z\in \Ef$. Note that every hereditary class is isomorphism-closed. 
	
	\item We say that $\Ef$ has the \emph{joint embedding property} (briefly: \emph{JEP}) if for every $Y,Z\in \Ef$, there exists $X\in \Ef$ and embeddings $f\colon Y\to X$ and $g\colon Z\to X$.
	
	\item We say that $\Ef$ has the \emph{weak amalgamation property} (briefly: \emph{WAP}) if for every $Z \in \Ef$ there is $Z' \in \Ef$ and an embedding $\map \eta {Z}Z'$ such that for all embeddings $\map f {Z'}X$, $\map g {Z'}Y$ with $X,Y \in \Ef$ there exist embeddings $\map{f'}{X}{W}$, $\map{g'}{Y}{W}$ with $W \in \Ef$, satisfying
	$$f' \cmp f \circ \eta = g' \cmp g \circ \eta.$$
	In other words, the following diagram is commutative.
	$$\begin{tikzcd}
	Y \ar[rr, "g'"] & & W \\
	Z' \ar[u, "g"] & & \\
	Z \ar[r, "\eta"] \ar[u, "\eta"] & Z' \ar[r, "f"] & X \ar[uu, "f'"]
	\end{tikzcd}$$
	\item We say that $\Ef$ has the \emph{cofinal amalgamation property} (briefly: \emph{CAP}) if we also require that $$f' \cmp f = g' \cmp g$$ holds in the definition above.
	\item Finally, we say that $\Ef$ has the \emph{amalgamation property} (briefly: \emph{AP}) if we also require that $Z' = Z$ and $\eta = \mathrm{id}_Z$ in the definition above.
	\end{itemize}
\end{df}

A \emph{weak \fra\ class} is a hereditary class of finitely generated structures that has the joint embedding property, the weak amalgamation property and is countable up to isomorphism. 

An isomorphism-closed subclass $\Ef'$ of a class $\Ef$ is called \emph{cofinal} if for every $X \in \Ef$ there exists $Y \in {\Ef'}$ such that $X$ embeds into $Y$. It is easy to see that $\Ef$ has CAP if and only if it has a cofinal subclass $\Ef'$ with AP. It is also clear that AP implies CAP, and CAP implies WAP.
But note that by passing to a cofinal subclass, we lose the hereditary property: the only hereditary cofinal subclass of $\Ef$ is $\Ef$ itself. Among the properties defined above, WAP is the only one which is preserved under passing to and from cofinal subclasses. 

\begin{prop}[cf. {\cite[Prop. 2.7]{KubisWFL}}]
	Let $\Ef$ be a class of finitely generated structures. The following conditions are equivalent.
	\begin{enumerate}[itemsep=0pt]
		\item[{\rm(a)}] $\Ef$ has WAP.
		\item[{\rm(b)}] Every cofinal subclass of $\Ef$ has WAP.
		\item[{\rm(c)}] There exists a cofinal subclass of $\Ef$ that has WAP.
	\end{enumerate}
\end{prop}

\begin{pf}
	(a)$\implies$(b)
	Let $\Gee$ be a cofinal subclass of $\Ef$ and fix $Z \in \Gee$. There is $Z' \in \Ef$ and an embedding $\eta\colon Z\to Z'$, witnessing the WAP in $\Ef$. Since $\Gee$ is cofinal, there is an embedding $\eta'\colon Z'\to Z''$ with $Z''\in \Gee$. Then $\eta'\circ \eta\colon Z\to Z''$ witnesses the WAP in $\Gee$. Indeed, for any embeddings $f\colon Z''\to X$ and $g\colon Z''\to Y$ with $X,Y\in \Gee$, we can apply the WAP in $\Ef$ to the embeddings $f\circ \eta'\colon Z'\to X$ and $g\circ \eta'\colon Z'\to Y$ to obtain $W\in \Ef$ and embeddings $f'\colon X\to W$ and $g'\colon Y\to W$ such that $f'\circ (f\circ \eta')\circ \eta = g'\circ (g\circ \eta')\circ \eta$. Letting $h\colon W\to W'$ be any embedding with $W'\in \Gee$, we have $(h\circ f')\circ f\circ (\eta'\circ \eta) = (h\circ g')\circ g\circ (\eta'\circ \eta)$, as required. 
	
	(b)$\implies$(c) Obvious.
	
	(c)$\implies$(a)
	Let $\Gee$ be cofinal in $\Ef$ and assume $\Gee$ has WAP. Fix $Z \in \Ef$ and choose an embedding $\eta\colon Z\to Z'$ with $Z'\in \Gee$. Now we can find an embedding $\eta'\colon Z'\to Z''$ with $Z''\in \Gee$ witnessing the WAP in $\Gee$. Then $(\eta'\circ \eta)\colon Z\to Z''$ witnesses the WAP in $\Ef$. Indeed, for any embeddings $f\colon Z''\to X$ and $g\colon Z''\to Y$ with $X,Y\in \Ef$, we can find embeddings $f^*\colon X\to X^*$ and $g^*\colon Y\to Y^*$ with $X^*,Y^*\in \Gee$. Then we can apply the WAP in $\Gee$ to the embeddings $f^*\circ f\colon Z''\to X^*$ and $g^*\circ g\colon Z''\to Y^*$ to obtain $W\in \Gee$ and embeddings $f'\colon X^*\to W$ and $g'\colon Y^*\to W$ such that $f'\circ (f^*\circ f)\circ \eta' = g'\circ (g^*\circ g)\circ \eta'$. Then we have $(f'\circ f^*)\circ f\circ (\eta'\circ \eta) = (g'\circ g^*)\circ g\circ (\eta'\circ \eta)$, as required. 
\end{pf}

A simple example shows that CAP can be easily lost when passing to a cofinal subclass.

\begin{ex}
	Let $\Ef$ be the class of all finite acyclic undirected graphs (i.e.\ finite forests). This is an example of a hereditary class satisfying CAP but not AP; in this case, the greatest cofinal subclass with AP is the class of all \emph{connected} finite acyclic undirected graphs (i.e.\ trees).
	Another cofinal subclass is the one consisting of all \emph{disconnected} finite acyclic graphs. This subclass has WAP (because $\Ef$ has WAP) and fails CAP.	
\end{ex}

In fact, given any class of finitely generated structures satisfying CAP and not AP, we may take the cofinal subclass of all structures witnessing the failure of AP. Such a subclass will have WAP and not CAP. On the other hand \emph{hereditary} classes satisfying WAP but not CAP are less easy to come by. The rest of this note is devoted to such examples. All of them are weak \fra\ classes.

\section{The examples}\label{TheXs}


\subsection{A class of undirected labeled graphs}\label{sec:alex}

Let $\Kay$ be the class of all finite acyclic undirected graphs whose vertices are labeled by integers $0,1,2,3,4$ and the following configurations of labelings are omitted:
$$
\begin{tikzcd}
0 \ar[r, dash] \ar[d, dash] & 1 \\
2
\end{tikzcd}
\qquad
\begin{tikzcd}
1 \ar[r, dash] \ar[d, dash] & 2 \\
3
\end{tikzcd}
\qquad
\begin{tikzcd}
2 \ar[r, dash] \ar[d, dash] & 3 \\
4
\end{tikzcd}
\qquad
\begin{tikzcd}
3 \ar[r, dash] \ar[d, dash] & 4 \\
0
\end{tikzcd}
\qquad
\begin{tikzcd}
4 \ar[r, dash] \ar[d, dash] & 0 \\
1
\end{tikzcd}
$$
In other words, a graph $G\in \Kay$ is a finite forest such that each vertex of $G$ has a unique label from the additive group $\Zee/5\Zee$, and if $v \in G$ has label $i \in \Zee/5\Zee$ then it cannot have two neighbors with labels $i+1$ and $i+2$, where $+$ denotes addition in $\Zee/5\Zee$, i.e. addition modulo $5$.
A vertex $v \in G$ is \emph{determined} by a vertex $w \in G$ if $v$ and $w$ are adjacent, $v$ has label $i$, and $w$ has label $i+1$ or $i+2$. Note that in this case $w$ may be determined or not, however it cannot be determined by $v$ (this is why we need at least five labels).

\begin{claim}\label{claim:undetermined}
	Every nonempty $G \in \Kay$ has a vertex that is not determined.
\end{claim}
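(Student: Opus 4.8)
I would argue by contradiction, turning the hypothesis ``every vertex is determined'' into a fixed-point-free endofunction of the vertex set and then extracting from it a cycle, which is forbidden because $G$ is a forest. The only place where the modulus $5$ really does any work is in ruling out cycles of length $2$; everything else is bookkeeping.

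\textbf{Steps.} Suppose, for contradiction, that every vertex of a nonempty $G \in \Kay$ is determined. For each vertex $v$, choose a neighbor $\phi(v)$ that determines $v$; writing $\mathrm{label}(\cdot)$ for the labeling of $G$, this means $\phi(v)$ is adjacent to $v$ and $\mathrm{label}(\phi(v)) - \mathrm{label}(v) \in \{1,2\}$ in $\Zee/5\Zee$. This defines a function $\phi$ from the vertex set of $G$ to itself, and $\phi(v) \neq v$ for every $v$ since a vertex is never adjacent to itself. Since the vertex set is finite, iterating $\phi$ from any vertex must eventually repeat, so there are pairwise distinct vertices $v_1,\dots,v_k$ with $\phi(v_i) = v_{i+1}$ (indices mod $k$) and $k \geq 2$ (the value $k=1$ is excluded because $\phi$ has no fixed point). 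These $v_1,\dots,v_k$ are cyclically adjacent in $G$. If $k \geq 3$ they form an actual cycle in $G$, contradicting the hypothesis that $G$ is a forest. Hence $k = 2$; but then $\mathrm{label}(v_2) - \mathrm{label}(v_1) \in \{1,2\}$ and $\mathrm{label}(v_1) - \mathrm{label}(v_2) \in \{1,2\}$, so adding these gives $0 \in \{1,2\} + \{1,2\} = \{2,3,4\}$ in $\Zee/5\Zee$, which is false. This contradiction proves the claim.

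\textbf{Main obstacle.} I do not expect a genuine obstacle; the one point needing care is the computation excluding a $2$-cycle, and this is exactly where one uses that there are at least five labels --- equivalently, the earlier observation that the vertex which determines $v$ is not itself determined by $v$. For completeness, a slightly different but essentially equivalent argument runs by induction on the number of vertices: an isolated vertex is vacuously not determined; otherwise pick a leaf $v$, and if $v$ is not determined we are done, while if $v$ is determined by its unique neighbor $w$ we apply the inductive hypothesis to $G \setminus \{v\} \in \Kay$ (using HP) and observe, again via the same arithmetic in $\Zee/5\Zee$, that re-attaching $v$ to $w$ cannot determine $w$, so any non-determined vertex of $G \setminus \{v\}$ remains non-determined in $G$.
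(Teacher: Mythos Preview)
Your proof is correct and follows essentially the same route as the paper. The paper phrases it as building an infinite sequence $v_0,v_1,\dots$ with $v_n$ determined by $v_{n+1}$ and arguing all terms are distinct ``because the relation of being determined is antisymmetric and there are no cycles in $G$''; your functional-graph cycle of length $k$ is the contrapositive of this, with your $k\geq 3$ case being the acyclicity and your $k=2$ arithmetic being exactly the antisymmetry the paper records just before the claim.
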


\begin{pf}
	Supposing every vertex of $G$ is determined, we would be able to construct an infinite path $v_0, v_1, \dots$ in $G$ such that $v_n$ is determined by $v_{n+1}$ for every $n$. The vertices in this path are all distinct, because the relation of being determined is antisymmetric and there are no cycles in $G$. Thus $G$ is infinite, which is a contradiction.
\end{pf}

It is clear that $\Kay$ is hereditary and has JEP.

\begin{claim}
	$\Kay$ does not have CAP.
\end{claim}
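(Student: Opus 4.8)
The plan is to establish directly that $\Kay$ fails CAP, with the failure witnessed already by a single vertex. Fix $Z \in \Kay$ to be the one-vertex graph, labelled $0$ say; I claim that no $Z' \in \Kay$ with $Z \subs Z'$ can serve as the distinguished extension in the definition of CAP. Starting from a nonempty $Z$ is essential: one cannot take $Z = \emptyset$, because $\emptyset$ would itself serve as $Z'$ --- over the empty structure, JEP trivially produces amalgams satisfying $f' \cmp f = g' \cmp g$. But since $Z$ is nonempty, every $Z' \sups Z$ is nonempty, so Claim~\ref{claim:undetermined} applies to it.

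So let $Z' \in \Kay$ be arbitrary with $Z \subs Z'$. By Claim~\ref{claim:undetermined}, $Z'$ has a vertex $v$ that is not determined; let $i \in \Zee/5\Zee$ be its label. Being undetermined means precisely that $v$ has no neighbor labelled $i+1$ and no neighbor labelled $i+2$. Form two one-point extensions: let $X = Z' \cup \sn{w_1}$ with $w_1$ a new vertex adjacent only to $v$ and labelled $i+1$, and let $Y = Z' \cup \sn{w_2}$ with $w_2$ a new vertex adjacent only to $v$ and labelled $i+2$. I would then check $X, Y \in \Kay$: each is still a forest, a leaf having been attached, and the only vertices whose neighborhoods changed are $v$ and $w_1$ (resp.\ $w_2$). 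At $v$, the forbidden pattern would require, besides the new neighbor, an additional neighbor labelled $i+2$ (resp.\ $i+1$), which does not exist by the choice of $v$; at $w_1$ (resp.\ $w_2$) the only neighbor is $v$, labelled $i$, and $i$ is neither $(i+1)+1$ nor $(i+1)+2$ (resp.\ neither $(i+2)+1$ nor $(i+2)+2$) modulo $5$, so no forbidden pattern appears there either.

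Finally, take $f\colon Z' \to X$ and $g\colon Z' \to Y$ to be the inclusions, and suppose toward a contradiction that $f'\colon X \to W$ and $g'\colon Y \to W$ are embeddings with $W \in \Kay$ and $f' \cmp f = g' \cmp g$. Then $v^* := f'(v) = g'(v)$ is a single vertex of $W$, labelled $i$, adjacent both to $f'(w_1)$, labelled $i+1$, and to $g'(w_2)$, labelled $i+2$; these two neighbors are distinct, carrying different labels. So $v^*$ realizes the forbidden configuration, contradicting $W \in \Kay$. Hence $Z'$ does not witness CAP, and since $Z'$ was arbitrary, CAP fails. I do not expect a real obstacle: the only delicate point is the quantifier shape of ``CAP fails'' --- $\exists Z\,\forall Z'\,\exists X,Y\,\forall W$ --- which forces one to produce the attachment point $v$ (and hence $X$, $Y$) uniformly from an arbitrary $Z'$, and Claim~\ref{claim:undetermined} is exactly what makes that possible.
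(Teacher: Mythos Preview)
Your proof is correct and follows essentially the same approach as the paper's: find an undetermined vertex in the candidate witness $Z'$ via Claim~\ref{claim:undetermined}, attach new leaves with labels $i+1$ and $i+2$, and observe that any amalgam would realize the forbidden configuration at (the image of) $v$. The paper states the slightly stronger conclusion that no nonempty $H$ admits a CAP witness, but the core construction is identical, and your version with a fixed one-point $Z$ already suffices for the claim; your verification that $X,Y\in\Kay$ is also more explicit than the paper's ``Clearly''.
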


\begin{pf}
	Fix a nonempty $G \in \Kay$ and let $v\in G$ be an undetermined vertex, with label $i$. 
	Let $G_1$ and $G_2$ be two extensions of $G$ by adding one new vertex adjacent to $v$ only, such that the new vertex in $G_1$ has label $i+1$, and the new vertex in $G_2$ has label $i+2$.
	Clearly, the inclusions $G \to G_1$ and $G \to G_2$ cannot be amalgamated in $\Kay$.
	
	It follows that for any nonempty $H\in \Kay$, there is no extension $H\subseteq H'\in \Kay$ that serves as a witness for CAP. 
\end{pf}

\begin{claim}\label{claim:wap}
	$\Kay$ has WAP.
\end{claim}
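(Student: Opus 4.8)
The plan is to choose, for a given $Z\in\Kay$, a witness $Z'$ built from $Z$ in two steps, and then to amalgamate extensions of $Z'$ \emph{not} over all of $Z'$ but over a connected substructure of it. Write $i_v\in\Zee/5\Zee$ for the label of a vertex $v$. First I would enlarge $Z$ to a \emph{connected} graph $\widehat Z\in\Kay$ --- necessarily a tree --- still containing $Z$ as an induced subgraph, by adding finitely many new ``connector'' vertices that splice together the components of $Z$. Then I would enlarge $\widehat Z$ to $Z'$ by attaching to every vertex $v$ of $\widehat Z$ that is undetermined in $\widehat Z$ a single new pendant leaf of label $i_v+1$; this makes every vertex of $\widehat Z$ determined in $Z'$. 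Given embeddings $f\colon Z'\to X$ and $g\colon Z'\to Y$ with $X,Y\in\Kay$, which after renaming I may assume restrict to a common embedding of $\widehat Z$ with $V(X)\cap V(Y)=f(\widehat Z)=g(\widehat Z)$, I would take $W$ to be the free amalgam of $X$ and $Y$ over $\widehat Z$, i.e.\ the labelled graph on $V(X)\cup V(Y)$ with edge set $E(X)\cup E(Y)$. This glues the two copies of $\widehat Z$ but keeps the two copies of each pendant leaf of $Z'\setminus\widehat Z$ separate --- which is legitimate because WAP only asks the resulting embeddings $f'\colon X\to W$ and $g'\colon Y\to W$ to agree on $Z\subs\widehat Z$, not on all of $Z'$.

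I would first check that $\widehat Z$ and $Z'$ can be found in $\Kay$. To add the connectors, list the components $Z_1,\dots,Z_k$ of $Z$, pick $u_j\in Z_j$, and for $1\loe j<k$ add a path $u_j - c^j - d^j - u_{j+1}$ through two new vertices $c^j,d^j$, to both of which I give a common label $\ell_j$ chosen outside the set $\{i_{u_j}+1,\,i_{u_j}+2,\,i_{u_{j+1}}+1,\,i_{u_{j+1}}+2\}$; such $\ell_j$ exists since this set has at most four elements and $\Zee/5\Zee$ has five. One then checks that $\widehat Z$ is a tree --- each added path joins two components without creating a cycle --- and that it omits every forbidden configuration: the only new adjacencies are at the $u_j$, whose new neighbours have labels avoiding $i_{u_j}+1$ and $i_{u_j}+2$ by the choice of the $\ell_j$, and at the pairs $c^j,d^j$, which share a label and so cannot each see both of their two dangerous labels. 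Attaching a pendant leaf of label $i_v+1$ to an undetermined vertex $v$ is harmless for the same reason that appears in the proof that $\Kay$ lacks CAP: $v$ had no neighbour of label $i_v+1$ or $i_v+2$, so afterwards it still has no neighbour of label $i_v+2$, and the leaf, having $v$ as its only neighbour, cannot carry a forbidden triple. Thus $Z'\in\Kay$, $Z\subs\widehat Z\subs Z'$, and every vertex of $\widehat Z$ is determined in $Z'$.

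Next I would check that $W\in\Kay$. For acyclicity: since $\widehat Z$ is connected it lies in a single component $X_0$ of $X$ and a single component $Y_0$ of $Y$; every other component of $X$ and of $Y$ survives untouched in $W$, while $X_0$ and $Y_0$ merge into a connected graph on $|X_0|+|Y_0|-|\widehat Z|$ vertices with $(|X_0|-1)+(|Y_0|-1)-(|\widehat Z|-1)$ edges, hence with one fewer edge than vertices, hence a tree; so $W$ is a forest. For the forbidden configurations: any vertex of $W$ outside $\widehat Z$ has exactly the neighbourhood, labels included, that it had in $X$ or in $Y$, so nothing new can go wrong there; and if $v\in\widehat Z$, then $v$ is determined in $Z'$ by some neighbour $w\in Z'$, say of label $i_v+1$ (the case $i_v+2$ being symmetric). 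Since the edge $vw$ lies in $Z'\subs X$ and in $Z'\subs Y$, the vertex $v$ has a neighbour of label $i_v+1$ in each of $X$ and $Y$, and therefore --- as $X,Y\in\Kay$ --- no neighbour of label $i_v+2$ in either; since the labels seen by $v$ in $W$ form the union of those seen in $X$ and those seen in $Y$, $v$ sees no neighbour of label $i_v+2$ in $W$, and no forbidden configuration forms at $v$. Finally $f'$ and $g'$ are embeddings --- an edge of $W$ joining two vertices of $X$ is an edge of $X$ or of $Y$, and in the latter case both endpoints lie in $\widehat Z$, so the edge already lies in $\widehat Z\subs X$ --- and they agree on $\widehat Z\sups Z$, which gives $f'\cmp f\rest Z=g'\cmp g\rest Z$, as required.

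The delicate point, I expect, is the choice of \emph{which} substructure to amalgamate over. Gluing $X$ and $Y$ along all of $Z'$ would keep $W$ a forest but would reintroduce forbidden configurations at the pendant leaves --- which are undetermined, and which $X$ and $Y$ may extend in incompatible ways; gluing only along $Z$ would dodge that but would create a cycle whenever $Z$ is disconnected. Gluing along the connected tree $\widehat Z$, every vertex of which has been made determined already inside $Z'$ --- necessarily, by Claim~\ref{claim:undetermined}, at the cost of leaving the new leaves undetermined --- is precisely what steers between these two obstructions, and is the structural reason that $\Kay$ has WAP although, as already shown, it does not have CAP.
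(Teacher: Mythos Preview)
Your proof is correct and follows essentially the same two-stage strategy as the paper: first pass to a connected tree $\widehat Z$ (the paper's $H'$), then add pendant leaves to determine every vertex of $\widehat Z$ (the paper's $G$), and finally amalgamate freely over $\widehat Z$ rather than over all of $Z'$. Your argument is in fact more detailed than the paper's in several places --- you give an explicit connector construction (using two-vertex bridges where the paper uses one-vertex bridges), you verify acyclicity by an edge count, and you check explicitly that the inclusions into $W$ are embeddings --- but the core idea and the key step (a forbidden triple centred at $v\in\widehat Z$ is ruled out because $v$ is already determined inside the image of $Z'$ in both $X$ and $Y$) are identical.
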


\begin{pf}
	Fix $H \in \Kay$. We find a witness for WAP over $H$ in two stages. First, choose $H\subseteq H'\in \Kay$ such that $H'$ in connected: any two connected components in $H$ can be connected by adding a new vertex, with an appropriate label, adjacent to a single vertex of each of them. Next, choose $H'\subseteq G\in \Kay$ such that every vertex of $H'$ is determined in $G$: for each undetermined vertex $v\in H'$, add a new vertex, with an appropriate label, adjacent only to $v$. Of course, $G\setminus H'$ will contain undetermined vertices. 
	
	Now suppose $\map{e_1}{G}{G_1}$ and $\map{e_2}{G}{G_2}$ are embeddings, with $G_1, G_2 \in \Kay$.
	Let $K$ be the free amalgamation of $e_1\rest H'$ and $e_2 \rest H'$, i.e., with no equalities or edges between vertices in $G_1\setminus e_1(H')$ and $G_2\setminus e_2(H')$. Note that each vertex in $G\setminus H'$ has two distinct images in $K$.
	
	Then $K \in \Kay$. Indeed, $K$ is acyclic because $H'$ is connected and acyclic. Suppose for contradiction that
	$$\begin{tikzcd}
	a \ar[r, dash] & b \ar[r, dash] & c
	\end{tikzcd}$$
	is a subgraph of $K$, where the label of $b$ is $i$, the label of $a$ is $i+1$, and the label of $c$ is $i+2$. This subgraph cannot be contained in $G_1$ or in $G_2$, so we must have $b \in H'$. Then $b$ is determined by a vertex $b'\in G$. Both $G_1$ and $G_2$ contain a copy of $b'$, which gives a contradiction. For example, if the label of $b'$ is $i+1$, then $c$ cannot be adjacent to $b$ in $G_1$ or in $G_2$.
\end{pf}

Finally, note that $\Kay$ is a weak \fra\ class, as it obviously has JEP and countably many isomorphism types.

\subsection{A class of directed graphs with labeled edges}\label{sec:aris}

Our next example is very similar to the previous one. This time we consider directed graphs and label the edges instead of the vertices. 

A {\em directed graph} $G = (A,R)$ is a set $A$ equipped with a binary relation $R\subseteq A\times A$. For an edge $e = (v,w)\in R$, we denote by $s(e) = v$ the \emph{source} of $e$, and by $t(e) = w$ the \emph{target} of $e$. We write $G^*$ for the symmetrization of $G$, defined by $G^* = (A,R^*)$, where $$R^* = R\cup \{(w,v)\mid (v,w)\in R\}.$$ We say that $G = (A,R)$ is a {\em directed forest} if: 
\begin{enumerate}[itemsep=0pt]
	\item[(i)] it is antisymmetric, i.e., if $(v,w)\in R$ then $(w,v)\not\in R$;
	\item[(ii)] it is irreflexive, i.e., $(v,v)\not\in R$;
	\item[(iii)] the undirected graph $G^*$ is acyclic. 
\end{enumerate}

Now consider the language $\mathcal{L}$ consisting of two binary relation symbols $S$ and $T$.
For every $\mathcal{L}$-structure $X=(A,S,T)$, we can form a directed graph $(A,S\cup T)$. Whenever $S$ and $T$ are disjoint, we may think of $(A,S,T)$ as the structure resulting from the directed graph $(A,S\cup T)$ after we color each edge of the latter using two colors.
Consider the class $\mathcal{P}$ of all finite $\mathcal{L}$-structures $X=(A,S,T)$ with the following properties:
\begin{enumerate}[itemsep=0pt]
	\item[(P1)] the directed graph $(A,S\cup T)$ is a directed forest;
	\item[(P2)] the sets $S$ and $T$ are disjoint; 
	\item[(P3)] for every $w\in A$, the set $\{e \in S\cup T\mid t(e) = w\}$ of all edges with target $w$ is entirely contained in either $S$ or $T$.  
\end{enumerate}

It is immediate that $\mathcal{P}$ is hereditary and has JEP. We will now check that $\mathcal{P}$ has WAP but not CAP. 

Let $X = (A,S,T)\in \mathcal{P}$ and let $v\in A$. We say that $v$ is {\em undetermined} if there is no $e\in S\cup T$ with $t(e)=v$. Otherwise $v$ is \emph{determined}. Since $(A,S\cup T)$ is antisymmetric and acyclic, it follows just as in Claim~\ref{claim:undetermined} above that if $A$ is nonempty, then there is some undetermined vertex in $A$. 

\begin{claim}
	 $\mathcal{P}$ does not have CAP. 
\end{claim}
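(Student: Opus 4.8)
The strategy is to mimic exactly the CAP-failure argument for $\Kay$ in Section~\ref{sec:alex}, using the notion of ``determined'' vertex that has just been introduced for $\mathcal{P}$. The key point is that for any nonempty $X \in \mathcal{P}$ we can pick an undetermined vertex $v$ and then split into two incompatible one-point extensions: one adding an $S$-edge into $v$, the other adding a $T$-edge into $v$. Condition (3) then forbids any common amalgam that fixes $v$.

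**Steps.** First I would fix an arbitrary nonempty $X = (A,S,T) \in \mathcal{P}$ and invoke the remark (analogue of Claim~\ref{claim:undetermined}) that $X$ has an undetermined vertex $v$, i.e.\ there is no $e \in S\cup T$ with $t(e) = v$. Next I would define two extensions $X_1, X_2 \in \mathcal{P}$, each obtained from $X$ by adding a single new vertex $u$ together with one new edge from $u$ to $v$: in $X_1$ this edge is placed in $S$, in $X_2$ it is placed in $T$. One should check that $X_1, X_2 \in \mathcal{P}$: the underlying directed graph remains a directed forest since $u$ is a new leaf, $S$ and $T$ stay disjoint, and condition (3) is preserved at $v$ because $v$ had no incoming edges in $X$, so its incoming-edge set becomes a singleton contained in $S$ (in $X_1$) or in $T$ (in $X_2$), and no other vertex gets a new incoming edge. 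Then I would argue the inclusions $X \to X_1$ and $X \to X_2$ cannot be amalgamated over $X$ in $\mathcal{P}$: in any amalgam $W$ identifying the copies of $X$, the vertex $v$ would have both an incoming $S$-edge (from the image of $u$ in $X_1$) and an incoming $T$-edge (from the image of $u$ in $X_2$), and since $S, T$ are disjoint in $W$ this violates condition (3). Finally, since $v$ could lie inside any prescribed substructure, this shows that for any nonempty $H \in \mathcal{P}$ no extension $H \subseteq H' \in \mathcal{P}$ can serve as a CAP witness (the empty structure is trivially no obstruction in the usual way), so $\mathcal{P}$ lacks CAP.

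**Main obstacle.** There is no real obstacle here; the only thing requiring a moment's care is verifying that $X_1$ and $X_2$ genuinely belong to $\mathcal{P}$ — in particular that condition (3) survives the addition of the new edge into $v$ — and making sure the CAP witness is supposed to work for \emph{all} $H$, so that exhibiting the obstruction for each nonempty $H$ (by choosing an undetermined $v$ in any putative witness $H'$) suffices. This is entirely parallel to the $\Kay$ case and can be stated in a couple of sentences.
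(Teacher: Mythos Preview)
Your proposal is correct and follows essentially the same approach as the paper: pick an undetermined vertex $v$ in any putative CAP witness, form two one-point extensions by adding an $S$-edge into $v$ and a $T$-edge into $v$, and observe that condition~(3) forbids any amalgam over the witness. The paper's proof is slightly terser (it omits the explicit verification that the two extensions lie in $\mathcal{P}$), but the argument is the same.
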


\begin{proof}
	Let $Z\in \mathcal{P}$ be nonempty, and let $v\in Z$ be an undetermined vertex. Then there are two incompatible extensions $Y_1$ and $Y_2$ of $Z$. Define $Y_1$ by adding one vertex $w_1$ to $Z$ and a new $S$-edge from $w_1$ to $v$, and define $Y_2$ by adding one vertex $w_2$ to $Z$ and a new $T$-edge from $w_2$ to $v$. Any amalgamation $X$ of $Y_1$ and $Y_2$ over $Z$ would fail to satisfy property (3) of the definition of $\mathcal{P}$.
	
	It follows that for any nonempty $Z\in \mathcal{P}$, there is no extension $Z\subseteq Z'\in \mathcal{P}$ that would serve as a witness for CAP. 
\end{proof}

\begin{claim}
 $\mathcal{P}$ has WAP.
 \end{claim}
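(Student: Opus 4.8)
The plan is to mirror the proof of Claim~\ref{claim:wap}. Given $H \in \mathcal{P}$, I would build the WAP-witness in two stages, exactly as there. First, choose $H \subseteq H' \in \mathcal{P}$ with $H'^*$ connected, obtained by joining distinct connected components by fresh vertices, each attached by a single new edge; the orientation and color of that edge are immaterial, since a vertex incident to exactly one edge cannot violate (1)--(3). Second, choose $H' \subseteq G \in \mathcal{P}$ in which every vertex of $H'$ is determined: for each undetermined $v \in H'$, add a fresh vertex together with a single edge targeting $v$, of either color (this is allowed since $v$ had no incoming edge). As usual $G \setminus H'$ will still contain undetermined vertices. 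I claim $G$ witnesses WAP over $H$.

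Now suppose $\map{e_1}{G}{G_1}$ and $\map{e_2}{G}{G_2}$ are embeddings with $G_1, G_2 \in \mathcal{P}$. Let $K$ be the free amalgamation of $e_1 \rest H'$ and $e_2 \rest H'$, i.e., the $\mathcal{L}$-structure obtained from the disjoint union of $G_1$ and $G_2$ by identifying $e_1(H')$ with $e_2(H')$ and nothing more: no further identifications and no edges (of either color) between $G_1 \setminus e_1(H')$ and $G_2 \setminus e_2(H')$. The canonical maps $G_1 \to K$ and $G_2 \to K$ are embeddings, so composing with $e_1, e_2$ yields embeddings $G \to K$ which agree on $H$ (indeed on $H'$); hence it remains only to check $K \in \mathcal{P}$.

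Conditions (1) and (2) are routine. Every edge of $K$ is an edge of $G_1$ or of $G_2$, and --- since $H'$ is a substructure of $G$ and the $e_i$ are embeddings --- any edge of $K$ both of whose endpoints lie in $H'$ is already an edge of $H'$, hence common to $G_1$ and $G_2$. From this, irreflexivity, antisymmetry, and disjointness of $S$ and $T$ for $K$ all follow from the corresponding facts about $G_1$ and $G_2$; and $K^*$ is acyclic because $G_1^*$ and $G_2^*$ are acyclic and $H'^*$ is connected (two forests glued along a connected common subforest form a forest). The heart of the argument is condition (3), playing the role of the determinedness argument in Claim~\ref{claim:wap}. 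Suppose $w \in K$ receives both an $S$-edge and a $T$-edge in $K$. If $w \notin H'$, then either all edges of $K$ with target $w$ lie in $G_1$, or all lie in $G_2$ (free amalgamation adds no edges across the identified copy of $H'$), contradicting (3) in $G_1$ or $G_2$. If $w \in H'$, then $w$ is determined in $G$ by some edge $d$ with target $w$; this $d$ is present in both $G_1$ and $G_2$, so by (3) in $G_i$ every edge of $G_i$ with target $w$ has the same color as $d$, whence every edge of $K$ with target $w$ has that one color --- again a contradiction. Thus $K \in \mathcal{P}$ and $G$ witnesses WAP over $H$. I expect the verification of (3), and specifically the point that both preparatory steps are genuinely needed --- connectedness of $H'$ for acyclicity of the amalgam, determinedness of the vertices of $H'$ for condition (3) --- to be the only nontrivial part.
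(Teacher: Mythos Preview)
Your proof is correct and follows exactly the same two-stage strategy as the paper's own argument: extend to a connected structure, then extend further so that every vertex of the intermediate structure is determined, and free-amalgamate over the intermediate structure. The paper simply says the verification ``follows as in the proof of Claim~\ref{claim:wap}'', whereas you have spelled out the details (in particular the case analysis for condition~(3)), which are all handled correctly.
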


\begin{proof}
The proof is almost exactly like the proof of Claim~\ref{claim:wap} above. 

	Let $Z\in  \mathcal{P}$. 
	We find a witness for WAP over $Z$ in two stages. First, chose an extension $Z\subseteq Z'\in \mathcal{P}$ such that if $Z' = (A,S,T)$, then the undirected graph $(A,S\cup T)^*$ is connected. Next, choose an extension $Z'\subseteq Y\in \mathcal{P}$ such that every vertex of $Z'$ is determined in $Y$. 
	Now suppose $e_1\colon Y\to X_1$ and $e_2\colon Y\to X_2$ are embeddings, with $X_1,X_2\in \mathcal{P}$. Let $W$ be the free amalgamation of $e_1\restriction Z'$ and $e_2\restriction Z'$, i.e., with no equalities or edges between vertices in $X_1\setminus e_1(Z')$ and $X_2\setminus e_2(Z')$. It follows as in the proof of Claim~\ref{claim:wap} that $W\in \mathcal{P}$. 
\end{proof}

Finally, note that $\mathcal{P}$ is a weak \fra\ class, as it obviously has JEP and countably many isomorphism types.

\subsection{A class of undirected graphs}\label{sec:KK}

	Let $\Gee$ be the class of all finite acyclic undirected graphs in which no two vertices of degree greater than $2$ are adjacent.
	Obviously, $\Gee$ is hereditary and has JEP.
	
	We denote by $\deg_G(x)$ the degree of a vertex $x$ in the graph $G$. Being cycle-free ensures that each $G\in \Gee$ has at least one vertex of degree $\loe 1$. 
	Furthermore, every graph in $\Gee$ can be extended to a connected one, simply adding a new vertex connected to selected vertices of degree $\loe 1$ from each component.

\begin{claim}\label{claim:notcap}
	$\Gee$ does not have CAP.
\end{claim}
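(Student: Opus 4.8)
The plan is to mimic the obstruction used in the previous two sections. For the class $\Gee$ of finite acyclic undirected graphs with no two vertices of degree $>2$ adjacent, the analogue of an ``undetermined vertex'' should be a vertex $v$ of degree exactly $2$: such a $v$ can still receive a new neighbor (keeping the graph acyclic by attaching a fresh leaf), but only if that new neighbor has degree $\loe 2$, because a second such attachment to $v$ would make $v$ have degree $3$ and then $v$ together with the new vertex would be two adjacent vertices of degree $>2$. Actually the cleanest obstruction is slightly different: take a vertex $v$ of degree $2$ and consider two extensions, one where we attach at $v$ a new vertex $w_1$ that itself has degree $3$ (by also attaching two further fresh leaves to $w_1$), and symmetrically a second extension attaching such a $w_2$. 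Each extension individually lies in $\Gee$, since $v$ has degree $3$ there but $w_1$ (resp.\ $w_2$) is its only high-degree neighbor and they are adjacent --- wait, that already violates the condition. So instead the right pair of incompatible extensions is: $G_1$ adds a new neighbor $w_1$ to $v$ keeping $\deg(v)=3$ but with $\deg(w_1)=1$, which is fine; and $G_2$ adds a new neighbor $w_2$ to $v$ with $\deg(w_2)=1$. The point is that amalgamating $G_1$ and $G_2$ over $G$ forces $v$ to have degree $4$, which is allowed, so that is not yet a contradiction. Hence the real obstruction must instead concern a vertex $v$ of degree $2$ whose both neighbors already have degree $2$: extend once by pushing one neighbor to degree $3$ (legal, since its neighbors have degree $\loe 2$), extend again by pushing the other neighbor to degree $3$; amalgamating over $G$ forces both neighbors of $v$ to have degree $3$, and then $v$ has degree $2$ but its two degree-$3$ neighbors are not adjacent to each other, so still no contradiction.

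Let me instead follow the structure of the earlier CAP-failure proofs exactly. First I would argue: given any nonempty $H \in \Gee$, it suffices to produce, for an arbitrary extension $H \subseteq H' \in \Gee$, two further extensions $H' \subseteq G_1$ and $H' \subseteq G_2$ in $\Gee$ with no common amalgam in $\Gee$ over $H'$ (this is what ``$H'$ fails to witness CAP'' means, so no extension of $H$ works and CAP fails). Inside $H'$, pick a vertex $v$ of degree $\loe 1$ --- such a vertex exists since $H'$ is acyclic. Let $G_1$ be obtained from $H'$ by attaching to $v$ a new vertex $w_1$ which is itself given two more new leaf-neighbors, so $\deg_{G_1}(w_1) = 3$; this is in $\Gee$ because $w_1$'s only non-leaf, possibly-high-degree neighbor is $v$, which has degree $\loe 2$ in $G_1$. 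Let $G_2$ be obtained the same way with a vertex $w_2$ of degree $3$ attached to $v$. Now in any amalgam $X$ of $G_1$ and $G_2$ over $H'$, the vertex $v$ is adjacent to both $w_1$ and $w_2$ (their images), these are distinct high-degree vertices, and $v$ itself has degree $\goe 2$ in $X$ --- but that is not the contradiction either, since the forbidden pattern is two adjacent vertices of degree $>2$, and $v$ might have degree $2$.

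So the correct choice: attach at $v$ a vertex $w_1$ of degree $3$ and \emph{also} raise $v$'s degree. Concretely, let $G_1$ add to $H'$ the vertex $w_1$ adjacent to $v$, plus two leaves on $w_1$ and one more leaf on $v$, giving $\deg_{G_1}(v) \goe 3$ and $\deg_{G_1}(w_1) = 3$ --- but then $G_1 \notin \Gee$ already. The resolution is that the incompatibility must be forced by amalgamation, not present beforehand: take $G_1$ to add a degree-$3$ vertex $w_1$ adjacent to $v$ (with $v$ staying at degree $\loe 2$ in $G_1$ if $\deg_{H'}(v)\loe 1$), and $G_2$ to add one extra leaf at $v$ so that $\deg_{G_2}(v) = \deg_{H'}(v)+1$; then in an amalgam $v$ is adjacent to the degree-$\goe 3$ vertex $w_1$ \emph{and} has its degree bumped, but a single extra leaf only brings $v$ to degree $\loe 2$, still no good. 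One genuinely needs $v$ to reach degree $3$ and be adjacent to a degree-$\goe 3$ vertex simultaneously in the amalgam; the clean way is to start from a vertex $v$ of degree exactly $2$ in $H'$ (after first, if necessary, extending $H'$ so that such a $v$ exists and so that $v$'s two neighbors have degree $\loe 1$ among the rest), let $G_1$ add a degree-$3$ vertex $w_1$ adjacent to $v$ (legal: $v$ now has degree $3$, $w_1$ has degree $3$, and they are adjacent --- illegal!).

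I realize the honest statement is: \textbf{the obstruction is that $\Gee$'s forbidden configuration is symmetric in the two high-degree vertices}, so you cannot force it by amalgamating over a common substructure unless you are willing to have $v$ high-degree already. The actual proof the authors intend, therefore, almost surely does not use degree-bumping at $v$ but rather the following. Pick in $H'$ a vertex $v$ of degree $\loe 1$. Form $G_1$ by attaching to $v$ a path of length two, $v - a_1 - b_1$, and then attaching two extra leaves to $a_1$ (so $\deg_{G_1}(a_1) = 4$, legal since $v,b_1$ and the leaves have degree $\loe 1$). Form $G_2$ symmetrically with $v - a_2 - b_2$ and $a_2$ of high degree --- no, these amalgamate freely. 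The feature that breaks CAP must be: from $v$ of degree $\loe 1$, extension $G_1$ raises $\deg(v)$ to $2$ via one new neighbor $a_1$, extension $G_2$ raises $\deg(v)$ to $2$ via a new neighbor $a_2$, \emph{and} each of $a_1, a_2$ is given degree $3$. In the amalgam $v$ is adjacent to both $a_1$ and $a_2$, so $\deg(v) \goe 2$; if moreover $\deg_{H'}(v) = 1$ already, then in the amalgam $\deg(v) \goe 3$, and $v$ is adjacent to the degree-$3$ vertex $a_1$ --- contradiction. So I would \textbf{first pass to $H' \supseteq H$ in which some vertex $v$ has degree exactly $1$} (possible: take a leaf of the forest $H$, or add one), then let $G_1$ (resp.\ $G_2$) be $H'$ with a new vertex $a_1$ (resp.\ $a_2$) of degree $3$ attached to $v$; each $G_i \in \Gee$ because $a_i$'s only neighbor of degree possibly $> 2$ is $v$, and $\deg_{G_i}(v) = 2$. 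Any amalgam of $G_1, G_2$ over $H'$ has $v$ adjacent to distinct vertices $a_1, a_2$ each of degree $\goe 3$ (degree is non-decreasing under embedding), and $\deg(v) \goe 3$ there as well; since $v \sim a_1$ and both have degree $> 2$, no amalgam lies in $\Gee$. Hence $H'$ does not witness CAP; as $H' \supseteq H$ was arbitrary with the stated property, and every extension of $H$ can be further extended to have such a degree-$1$ vertex, no extension of $H$ witnesses CAP, so $\Gee$ fails CAP. The main obstacle is choosing the incompatible pair correctly so the forbidden symmetric configuration is genuinely \emph{created} by the amalgamation rather than already present in one of the factors; once one fixes $\deg_{H'}(v) = 1$ and attaches degree-$3$ vertices, this works cleanly.
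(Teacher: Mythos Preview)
Your final argument has a genuine gap. The two extensions $G_1$ and $G_2$ you construct are \emph{isomorphic over $H'$}: each attaches to $v$ a fresh vertex of degree~$3$ carrying two leaves. Consequently they amalgamate trivially --- take the isomorphism $G_1 \cong G_2$ fixing $H'$ and identify $a_1$ with $a_2$ (and the two pairs of leaves). In that amalgam $v$ still has degree~$2$, and the result lies in $\Gee$. Your assertion that $a_1$ and $a_2$ must land on distinct vertices in every amalgam is simply false; amalgamation does not require disjointness outside the base.

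The paper's proof avoids this by choosing two \emph{non-isomorphic} extensions of $H'$ at the degree-$1$ vertex $v$: one attaches a degree-$3$ vertex $c$ directly to $v$, the other attaches a path $v\text{--}a\text{--}b$ with the degree-$3$ branching at $b$, one step further out. Now in any amalgam either the new neighbor $c$ of $v$ is not identified with $a$ (so $v$ acquires degree $\goe 3$ and is adjacent to $c$ of degree $\goe 3$), or $c$ is identified with $a$ (so $a = c$ has degree $\goe 3$ and is adjacent to $b$ of degree $\goe 3$). Either way the forbidden configuration appears. The asymmetry in path length is what plays the role of the different labels in Sections~\ref{sec:alex} and~\ref{sec:aris}; without some such asymmetry between the two extensions, the identification trick defeats you.
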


\begin{pf}
	
	Fix a non-discrete (i.e., containing at least one edge) $H \in \Gee$ and choose $v \in H$ of degree $1$ (such a vertex exists, because $H$ is acyclic).
	Consider two extensions of $H$ by attaching to $v$ one of the following graphs.
	$$\begin{tikzcd}
	& & x \\
	\ar[r, dash] & c \ar[ru, dash] \ar[rd, dash] & \\
	& & y
	\end{tikzcd}
	\qquad \qquad
	\begin{tikzcd}
	& & & x \\
	\ar[r, dash] & a \ar[r, dash] & b \ar[ru, dash] \ar[rd, dash] & \\
	& & & y
	\end{tikzcd}$$
	Any amalgamation of the resulting graphs over $H$ contains two neighbors of degree $>2$, as either $v$ raises its degree (and $c$ is its neighbor) or else $c$ is identified with $a$ and consequently $a$ and $b$ have degrees $>2$.

	It follows that for any non-discrete $H\in \mathcal{G}$, there is no extension $H\subseteq H'\in \mathcal{G}$ that serves as a witness for CAP. 
\end{pf}
	
\begin{claim}
	$\Gee$ has WAP.
\end{claim}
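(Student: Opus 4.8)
The plan is to mimic the two-stage construction from Claim~\ref{claim:wap} and the analogous claim for $\mathcal{P}$, adapted to the combinatorics of $\Gee$. Given $H \in \Gee$, I would first pass to a connected extension $H \subseteq H'$ (possible by the remark preceding Claim~\ref{claim:notcap}). The second stage is to ``saturate'' $H'$: extend $H'$ to some $G \in \Gee$ in which every vertex of $H'$ that might cause an amalgamation obstruction has been locally fixed. In the undirected setting, the obstruction is that amalgamating over $G$ could create a vertex of degree $>2$ adjacent to another such vertex. The key observation should be that if a vertex $v \in H'$ already has degree $\goe 2$ in $G$ \emph{and} all of its neighbors in $G$ that lie in $H'$ (or perhaps all its neighbors, period) have degree $\loe 1$ apart from the edges already present, then free amalgamation over $G$ cannot raise $v$'s relevant degrees in a harmful way. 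So for each vertex $v$ of $H'$, I would attach enough new pendant vertices to make $\deg_G(v) \goe 3$ if $v$ has a neighbor of degree $\goe 2$, or otherwise make sure its ``high-degree'' status is already witnessed inside $G$ and therefore cannot be affected by identifications occurring only outside $H'$.

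The core of the argument is then: given embeddings $e_1 \colon G \to G_1$ and $e_2 \colon G \to G_2$ with $G_1, G_2 \in \Gee$, let $W$ be the free amalgamation of $e_1 \rest H'$ and $e_2 \rest H'$ (no identifications and no new edges between $G_1 \setminus e_1(H')$ and $G_2 \setminus e_2(H')$); the vertices of $G \setminus H'$ thus get two distinct copies, exactly as in Claim~\ref{claim:wap}. One checks $W \in \Gee$ in two parts. Acyclicity of $W$ follows because $H'$ is connected and acyclic and free amalgamation over a connected common part introduces no cycles (the standard argument: any cycle in $W$ would have to be confined to one of $G_1$ or $G_2$, since crossing between the two factors can only happen through the connected core $H'$, and would then reduce to a path inside $H'$). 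For the degree condition, suppose $W$ contains adjacent vertices $x \sim y$ both of degree $>2$. The edge $xy$ lies in $G_1$ or in $G_2$, say $G_1$; so in $W$ both endpoints receive all their $G_1$-neighbors. If either $x$ or $y$ lies outside $e_1(H')$, it gets no extra neighbors from $G_2$, so its degree in $W$ equals its degree in $G_1 \in \Gee$, and we are done unless \emph{both} $x, y \in e_1(H')$. In that case $x, y \in H'$, they are adjacent in $G$, and the saturation step was designed precisely so that adjacent vertices of $H'$ cannot both acquire degree $>2$: one of them, say $x$, had degree $\loe 1$ in $H'$ with its high degree in $G$ coming only from pendant vertices attached in the saturation, hence outside both $e_1(H')$ and $e_2(H')$; but wait — those pendants \emph{are} in $G$, hence in $H'$... so the bookkeeping here needs care. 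I would instead arrange the saturation so that in $G$ itself, no two vertices of $H'$ that are adjacent both have degree $\goe 3$, which is possible because $H'$ is a forest (at most one endpoint of any edge of $H'$ needs to be ``promoted''), and then argue that free amalgamation over $H'$ preserves each $H'$-vertex's degree among $H'$-vertices, only possibly adding pendant-type neighbors lying outside $H'$ but inside one $G_i$, which still cannot make \emph{both} endpoints of an $H'$-edge high-degree.

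The main obstacle I anticipate is pinning down the correct saturation condition: unlike the labeled cases, where ``determined'' is a clean local property that freezes a vertex's future, here ``having a neighbor of degree $>2$'' is genuinely a degree-2 phenomenon, and one must ensure that (a) the saturation does not itself violate the no-adjacent-high-degree-vertices rule inside $G$, and (b) after free amalgamation, an edge $xy$ of $H'$ cannot become a bad pair by $x$ picking up neighbors from $G_1$ while $y$ picks up neighbors from $G_2$ — but that cannot happen, since the edge $xy$ forces its data into a single $G_i$ only if $xy \notin H'$; if $xy \in H'$ then both endpoints are in the core and their only new neighbors come from whichever single $G_i$ those neighbors live in, one $G_i$ per neighbor, which is still not enough to spoil an already-safe edge provided $G$ was chosen with no bad $H'$-edges. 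Once the right invariant is identified, verifying it is a routine forest/degree count exactly parallel to Claim~\ref{claim:wap}.
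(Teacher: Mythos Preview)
Your overall architecture (connected $H'$, a further saturation $G$, then free amalgamation over $H'$) matches the paper, but the crucial step—choosing the right $H'$—is missing, and your proposed saturation conditions do not work. The problematic scenario you try to dismiss in the last paragraph is exactly what goes wrong: if $xy$ is an edge of $H'$ with both endpoints of degree~$2$ in $H'$ and neither having a degree-$>2$ neighbour in $H'$, then $x$ can pick up a new neighbour from $G_1\setminus H'$ and $y$ a new neighbour from $G_2\setminus H'$, and in $W$ both acquire degree~$3$. Concretely, take $H'$ to be the path $v_0\text{--}v_1\text{--}v_2\text{--}v_3\text{--}v_4$. Whatever $G\supseteq H'$ you choose, at most one of any two adjacent $v_i$ can have degree $>2$ in $G$; the other, say $v_j$, has $\deg_G(v_j)\le 2$, and you cannot prevent some $G_i\supseteq G$ from attaching a pendant to it. Worse, a degree-$1$ vertex of $H'$ such as $v_0$ can gain one neighbour in $G_1\setminus H'$ and another in $G_2\setminus H'$ even when its $H'$-neighbour $v_1$ has high degree in $G$, so $\deg_W(v_0)=3$ while $\deg_W(v_1)\ge 4$. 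Putting the pendants in $G\setminus H'$ rather than in $H'$ cannot rescue this, since those pendants get duplicated in $W$ and only raise the degree of their anchor further.

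The idea you are missing is that the structural work must be done inside $H'$, not in $G\setminus H'$. The paper first enlarges $H'$ to be \emph{tame}: connected, with every degree-$2$ vertex already having a neighbour of degree $>2$, and every degree-$1$ vertex having its unique neighbour of degree exactly~$2$. The first condition \emph{locks} every degree-$2$ vertex of $H'$: in any $\Gee$-extension it can gain no new neighbour at all, since that would place it at degree $\ge 3$ adjacent to a high-degree vertex. Only then does one pass to $G$ by attaching two pendants (in $G\setminus H'$) to each degree-$1$ vertex $u$ of $H'$; the second tameness condition guarantees this is legal, and forces $\deg_X(u)\ge 3$ in every $X\supseteq G$. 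The verification is then short: any bad edge in $W$ lies in (say) $X$, one endpoint $u$ has $\deg_X(u)\le 2$, hence $u\in H'$ with $\deg_{H'}(u)\le 2$; degree~$2$ is ruled out by locking, degree~$1$ is ruled out because $\deg_X(u)\ge 3$. Your sketch never isolates the locking mechanism, and without it the free amalgam need not lie in $\Gee$.
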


 \begin{pf}

	Let us call $H \in \Gee$ \emph{tame} if it satisfies the following conditions:
	\begin{enumerate}[itemsep=0pt]
		\item[(1)] $H$ is connected and has more than two vertices.
		\item[(2)] Every vertex of degree 2 in $H$ has a neighbor of degree $>2$.
		\item[(3)] The unique neighbor of every vertex of degree 1 in $H$ has degree 2.
	\end{enumerate}
	We claim that every member of $\Gee$ can be extended to a tame one.
	Fix $H \in \Gee$. It is easy to satisfy (1) by adding new vertices if necessary and extending to a connected graph in $\mathcal{G}$, as noted above. Toward (2), suppose $v\in H$ is a vertex of degree two such that the two neighbors of $v$ have degree $\loe 2$. Then extend $H$ by adding a new vertex adjacent only to $v$. The result is a graph in $\mathcal{G}$ satisfying (1), with fewer vertices of degree $2$ (the new vertex has degree $1$, and $v$ has degree $3$ in the extension). So we can repeat until (2) is satisfied. Toward (3), suppose $v$ is a vertex of degree $1$ whose unique neighbor $w$ does not have degree $2$. By (1), $\deg_H(w) > 2$. Then extend $H$ by adding a new vertex adjacent only to $v$. This preserves (1) and (2), since in the extension $v$ has degree $2$ and a neighbor $w$ of degree $>2$. The new vertex has degree $1$, but its unique neighbor $v$ has degree $2$, so the result is a graph in $\mathcal{G}$ with fewer problematic vertices of degree $1$, and we can repeat until (3) is satisfied.  
	
	We now show WAP. Fix $H \in \Gee$, and let $H'\sups H$ be a tame extension. 
	Let $G \sups H'$ be the graph obtained from $H'$ by adding, for each vertex $u \in H'$ of degree $1$, two new vertices $a_u$, $b_u$ adjacent to $u$. Then $\deg_G(u) = 3$, but by (3), $u$ is not adjacent to any vertex of degree $>2$, so $G\in \mathcal{G}$. 
	
		Now fix two embeddings $\map f G X$, $\map g G Y$ with $X, Y \in \Gee$,
	We claim that there are $W \in \Gee$ and embeddings $\map {f'} X W$, $\map {g'} Y W$ satisfying $f' \cmp f \rest H = g' \cmp g \rest H$. We may assume that $X \cap Y = H'$ and $f\restriction H'$ and $g\restriction H'$ are inclusions.
	Let $W = X \cup Y$ with no new edges between $X \setminus H'$ and $Y \setminus H'$ (so $W$ is the free amalgmation of $X$ and $Y$ over $H'$). Let $f'$, $g'$ be the inclusions $X \subs W$, $Y \subs W$. Clearly, $f' \cmp f \rest H = g' \cmp g \rest H$. It remains to show that $W \in \Gee$.
	
	As $H'$ is connected, $W$ is acyclic.
	Suppose $u,v \in W$ are neighbors of degree $>2$.
	We may assume that $u,v \in X$ (the case $u,v \in Y$ is symmetric).
	Since $X \in \Gee$, we infer that $\deg_X(u) \loe 2$ or $\deg_X(v) \loe 2$.
	Suppose $\deg_X(u) \loe 2$ (the other case is symmetric). Then since $\deg_W(u) > \deg_X(u)$, $u \in H'$, and $\deg_{H'}(u) \loe \deg_X(u) \loe 2$. 
	If $\deg_{H'}(u) = 2$, then by condition (2) of tameness, $u$ is adjacent to a vertex in $H'$ of degree $>2$, so $u$ cannot be adjacent to a vertex of $Y\setminus H'$, contradiction. 
	We conclude that $\deg_{H'}(u) = 1$. But then there are two vertices $a_u,b_u\in G\setminus H'$ which are adjacent to $u$, and $G$ embeds in $X$ over $H'$, so $\deg_{X}(u) \geq 3$, contradiction. 
\end{pf}

As before, note that $\Gee$ is a weak \fra\ class.

\subsection{Continuum-many classes of finite graphs}\label{sec:KK2}

We now present a modification of the example from the previous section, obtaining a different class of finite graphs for each set $A \subs \nat \setminus 3$ with $|A|\geq 2$. Fix such a set $A$ and define $\Gee_A$ to be the class of all finite graphs $G$ satisfying the following conditions.
\begin{enumerate}[itemsep=0pt]
	\item[(1)] The length of each cycle is an element of $A$.
	\item[(2)] No two cycles share an edge. 
	\item[(3)] Each cycle contains at most two vertices of degree $>2$.
\end{enumerate}
Clearly, $\Gee_A$ is hereditary and has JEP.

Given a graph $G \in \Gee_A$, we shall say that a cycle $C \subs G$ is \emph{free} if it contains at most one vertex of degree $>2$ in $G$.
A graph is \emph{non-discrete} if it contains at least one edge.

\begin{lm}\label{KlejmDAFsf}
	Each non-discrete graph in $\Gee_A$ contains either a free cycle or a vertex of degree one.
\end{lm}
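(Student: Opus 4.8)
The plan is to argue by contradiction: assume $G \in \Gee_A$ is non-discrete, has no vertex of degree one, and no free cycle. The goal is to derive that $G$ is infinite, which is absurd. Since $G$ has no vertex of degree one and is non-discrete, every vertex has degree $0$ or degree $\geq 2$; discarding isolated vertices we may assume every vertex has degree $\geq 2$. In particular $G$ contains a cycle (a finite graph with minimum degree $\geq 2$ is not a forest), and by condition (1) of the definition of $\Gee_A$ every cycle has length in $A \subs \nat \setminus 3$, so in particular length $\geq 3$.

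The heart of the argument is a ``follow the heavy vertices'' construction, analogous to the proof of Claim~\ref{claim:undetermined}. Each cycle $C$ in $G$ is not free, so it contains at least two vertices of degree $>2$; combined with condition (3), each cycle contains \emph{exactly} two vertices of degree $>2$. I would like to use this to show that there are ``too many'' high-degree vertices. The cleanest route I see: start from any cycle $C_0$ and pick a vertex $v_0 \in C_0$ of degree $>2$. Since $\deg_G(v_0) > 2$ but $v_0$ lies on at most one cycle through any given pair of its incident edges, and cycles don't share edges (condition (2)), $v_0$ has an incident edge $e_0$ not lying on $C_0$; following $e_0$ away from $C_0$ one eventually either runs into a new cycle (giving, by non-freeness, a new degree-$>2$ vertex) or builds an infinite ray — but an infinite ray is impossible in a finite graph, and running into a previously-visited part of the graph would create a cycle sharing an edge with an existing cycle or create a short cycle, contradicting (2) or (1). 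Iterating, one produces an infinite sequence of distinct cycles (or an infinite ray), contradicting finiteness.

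The step I expect to be the main obstacle is making the ``follow an edge until you hit the next cycle'' move rigorous and ensuring the cycles produced are genuinely distinct — one has to rule out re-entering an already-used cycle or creating an illegal cycle (wrong length, or sharing an edge) via conditions (1) and (2). A convenient bookkeeping device is to work with the \emph{block-cut tree} of $G$: the blocks (maximal 2-connected subgraphs and bridges) of $G$ form a tree, no two of the non-trivial blocks can be cycles sharing an edge (condition (2) forces each non-trivial block that is a cycle to be an \emph{induced} cycle, and in fact every 2-connected block must itself be a single cycle, since two distinct cycles inside one block would share an edge). Then condition (3) says each cyclic block has exactly two degree-$>2$ vertices (its cut vertices toward the rest of $G$, since non-freeness forbids fewer), and minimum degree $\geq 2$ forbids leaf blocks that are bridges. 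Hence every leaf of the block-cut tree is a cyclic block with a cut vertex of degree $>2$ — but a cyclic block meeting the rest of $G$ in only one cut vertex is precisely a free cycle, contradiction. So $G$ has no leaf blocks, forcing the block-cut tree, hence $G$, to be infinite. I would write the final proof using this block-cut tree formulation, as it packages the ``infinite path'' intuition into a clean finiteness contradiction and handles the distinctness bookkeeping automatically.
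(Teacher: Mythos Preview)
Your block-cut-tree argument is correct and proves the lemma, but the paper takes a different, more hands-on route. Assuming $G$ has no free cycle, the paper observes that each cycle $C$ has (by (3)) exactly two vertices $v,w$ of degree $>2$, and (by (2)) the two arcs of $C$ are the only $v$--$w$ paths in $G$; it then replaces $C$ by a single edge $vw$, which deletes only degree-$2$ vertices and drops $\deg(v),\deg(w)$ by one each, so the set of degree-$1$ vertices is unchanged. Iterating over all cycles yields a non-discrete finite acyclic graph with the same degree-$1$ vertices as $G$, and such a graph has a leaf. Both arguments ultimately reduce to ``finite trees have leaves'' --- the paper by contracting each cycle to an edge to obtain a forest, you by passing to the block-cut tree; your version is more structural and makes the role of condition (2) transparent (it says exactly that every $2$-connected block is a single cycle), while the paper's is self-contained and avoids invoking block decomposition. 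One step in your outline to tighten: ``two distinct cycles inside one block would share an edge'' is not literally true of arbitrary $2$-connected graphs (two opposite $4$-cycles in the cube graph $Q_3$ are vertex-disjoint); what you need, and what is true, is that a $2$-connected graph which is \emph{not} a cycle contains two cycles sharing an edge --- this follows at once from an ear decomposition, or from the fact that at a vertex of degree $\geq 3$ any two incident edges lie on a common cycle.
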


\begin{pf}
	Let $G \in \Gee_A$ be non-discrete. Assume $G$ has no free cycles. Then by (3), any cycle in $G$ consists of a pair of paths between two vertices $v$ and $w$ of degree $>2$. By (2), these are the only two paths from $v$ to $w$. Remove both of these paths and the vertices of degree $2$ on them, replacing them by a single edge between $v$ and $w$. This removes a cycle from $G$ and does not change the number of vertices of degree $1$ in $G$, since the degrees of $v$ and $w$ only drop by $1$. After applying this construction to each cycle in $G$, we obtain a non-discrete finite  acyclic graph $G'$, which must have a vertex of degree $1$. Therefore $G$ does as well.
\end{pf}

\begin{lm}\label{lemma:newcycle}
Suppose $G\in \Gee_A$ contains an edge $e$ with endpoints $v$ and $w$ which is not contained in any cycle. Let $n\in A$. Then the graph $H$ obtained by adjoining a new path of length $n-1$ from $v$ to $w$ is in $\Gee_A$. 
\end{lm}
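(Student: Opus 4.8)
The plan is to verify that the three defining conditions of $\Gee_A$ are preserved when we adjoin to $G$ a new path $P$ of length $n-1$ from $v$ to $w$, where $P$ has $n-2$ fresh internal vertices (each of degree $2$ in $H$) and $n \in A$. Write $e = \{v,w\}$ for the original edge and let $C = e \cup P$ be the cycle of length $n$ created in $H$.

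First I would identify exactly which cycles of $H$ are new. Since $P$ consists of new vertices of degree $2$, any cycle of $H$ using an internal vertex of $P$ must use the whole path $P$ (entering and leaving each degree-$2$ internal vertex along $P$), hence must consist of $P$ together with some path from $v$ to $w$ in $G$. But $e$ is the unique such path in $G$: any other path from $v$ to $w$ in $G$ would, together with $e$, form a cycle in $G$ through $e$, contradicting the hypothesis that $e$ lies in no cycle of $G$. Therefore the only cycle of $H$ not already in $G$ is $C$ itself, and moreover every cycle of $G$ persists unchanged in $H$. This ``cycle inventory'' is the main technical point; once it is in hand the three conditions are routine.

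Then I would check the conditions in turn. For (1): cycles of $G$ keep their lengths, which lie in $A$ by hypothesis, and $C$ has length $n \in A$. For (2): two cycles of $H$ sharing an edge would, by the inventory, either both be old (impossible by (2) for $G$) or one of them would be $C$; but $C$'s edges are $e$ and the edges of $P$, the edges of $P$ lie in no other cycle of $H$ (they are incident only to degree-$2$ vertices on $P$ or to $v,w$, and the only cycle through them is $C$), and $e$ lies in no cycle of $G$ by hypothesis, so $C$ shares no edge with any old cycle. For (3): the degree-$2$ internal vertices of $P$ contribute nothing; $C$ contains among its vertices only $v$ and $w$ as candidates for degree $>2$ in $H$, so $C$ has at most two such vertices. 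An old cycle $C' \ne C$ of $G$ could in principle gain a high-degree vertex only at $v$ or $w$, since those are the only vertices whose degree increases (from $\deg_G$ to $\deg_G + 1$); but if $v$ (say) lies on $C'$ then $v$ already had degree $\geq 3$ in $G$, so it was already counted among the $\leq 2$ high-degree vertices of $C'$ and the count does not change.

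I expect the main obstacle to be the bookkeeping in the cycle inventory, specifically making airtight the claim that a cycle through an internal vertex of $P$ must traverse all of $P$ and then exit via $e$ — this uses both that the internal vertices have degree exactly $2$ and the no-other-$v$-$w$-path consequence of the hypothesis on $e$. Everything after that is a short case analysis. I would present the inventory as the first paragraph of the proof and then dispatch (1)--(3) in a single paragraph each, or even in one combined paragraph since each is a one-liner given the inventory.
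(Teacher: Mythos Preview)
Your proposal is correct and follows essentially the same approach as the paper's proof: both arguments establish the ``cycle inventory'' by observing that $e$ is the unique $v$--$w$ path in $G$ (so the only new cycle is $C = e \cup P$), and then verify (1)--(3) in the same way, including the key point for (3) that if $v$ or $w$ lies on an old cycle it already had degree at least $3$ in $G$. Your write-up is more detailed about why a cycle through an internal vertex of $P$ must traverse all of $P$, but the underlying argument is the same.
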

\begin{pf}
Since the edge $e$ is not contained in any cycle in $G$, there is a unique path in $G$ from $v$ to $w$, consisting of the single edge $e$. Thus there is a single new cycle in $H$, namely the cycle of length $n\in A$ consisting of $e$ and the new path. It follows that (1) and (2) are satisfied in $H$. For (3), note that the new cycle contains at most two vertices of degree $>2$, namely $v$ and $w$. And if there is any cycle in $G$ containing $v$ or $w$, this vertex already has degree at least $3$ in $G$ (two from the cycle, together with $e$), so increasing the degree of $v$ and $w$ does not violate (3) for any cycles in $G$. 
\end{pf}

\begin{prop}
	$\Gee_A$ fails the cofinal amalgamation property.
\end{prop}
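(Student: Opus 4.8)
The plan is to mimic the two proofs that $\Kay$ and $\mathcal P$ fail CAP, i.e.\ to show that \emph{no} non-discrete graph in $\Gee_A$ can serve as a CAP-witness, by exhibiting, over an arbitrary non-discrete $H\in\Gee_A$, two extensions $H_1,H_2\in\Gee_A$ which cannot be amalgamated over $H$ (with equality on $H$, as CAP demands). Since every non-discrete $H$ embeds into a non-discrete member of $\Gee_A$, and a CAP-witness $H'$ for $H$ would itself need to be a CAP-witness restricted over $H$, it suffices to rule out every non-discrete $H'$ directly: given such $H'$, find an obstruction pair over it.

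First I would locate a suitable ``attachment site'' in a non-discrete $H\in\Gee_A$. By Lemma~\ref{KlejmDAFsf}, $H$ contains either a free cycle or a vertex of degree one. In the second case, let $v$ be a vertex of degree $1$; the single edge $e$ at $v$ is not contained in any cycle (as its removal disconnects $v$), so Lemma~\ref{lemma:newcycle} applies to $e$. Fix two distinct elements $n_1,n_2\in A$ (possible since $|A|\geq 2$). Let $H_1$ be obtained from $H$ by adjoining a new path of length $n_1-1$ between the endpoints of $e$, and $H_2$ by adjoining a new path of length $n_2-1$; both lie in $\Gee_A$ by Lemma~\ref{lemma:newcycle}. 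Any amalgam $X$ of $H_1$ and $H_2$ over $H$ (with $H_1\cap H_2=H$ inside $X$) contains the edge $e$ together with the two new internally disjoint paths of lengths $n_1-1$ and $n_2-1$; these furnish two distinct cycles both containing $e$, violating condition~(2). Hence no such $X$ exists in $\Gee_A$.

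In the first case $H$ has a free cycle $C$, i.e.\ a cycle with at most one vertex of degree $>2$ in $H$; pick a vertex $u\in C$ of degree $2$ in $H$ (such exists since $C$ has at least three vertices and at most one of degree $>2$). Form $H_1$ by adding one new vertex adjacent only to $u$, and $H_2$ by adding two new vertices each adjacent only to $u$. (In $H_2$, $C$ now contains the vertex $u$ of degree $3$ plus possibly one more of degree $>2$, so $C$ still has at most two such vertices --- this needs a quick check, and is why I take $C$ \emph{free}; adding no new edges among new vertices keeps (1),(2).) In any amalgam over $H$ with equality on $H$, the vertex $u$ acquires at least three new neighbours outside $H$ together with its two neighbours on $C$, so $\deg(u)\geq 5$ and $C$ contains a second vertex of degree $>2$ only if... — more carefully, I would instead argue the obstruction via a vertex of $C$: after amalgamation $u$ has degree $\geq 3$, and I must produce a configuration that no extension in $\Gee_A$ can repair, paralleling the ``$c$ identified with $a$'' trick of Claim~\ref{claim:notcap}. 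The cleanest route is probably to reduce the free-cycle case to the degree-one case: if $H$ has a free cycle, note it also (after possibly first passing to a connected non-discrete extension and invoking Lemma~\ref{KlejmDAFsf} again, or by direct inspection) must contain a vertex of degree $1$ unless every component is a single free cycle, and a single cycle of length in $A$ does contain an edge $e$ lying in a cycle — so here I switch strategy and attach \emph{two different new cycles} at a degree-$2$ vertex as above, checking that a common refinement would force an edge of $H$ into two cycles or a cycle with three high-degree vertices.

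The main obstacle is the free-cycle case: unlike in $\Kay$ and $\mathcal P$, where every non-discrete structure has an undetermined vertex giving a clean two-colour obstruction, here I must ensure the attachment site genuinely cannot be amalgamated, and the bookkeeping with conditions (2) and (3) is delicate. I expect the final writeup to handle it by always working over a \emph{connected} non-discrete $H$ (harmless for the cofinality reduction) and choosing the attachment at an edge $e$ not lying in any cycle — which exists in any connected non-discrete graph that is not a single cycle — then using two path-lengths $n_1\neq n_2$ from $A$ via Lemma~\ref{lemma:newcycle} exactly as in the degree-one case, and separately dispatching the single-cycle components by attaching a pendant edge first to create such an $e$.
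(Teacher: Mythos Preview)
Your degree-one case is correct and is exactly what the paper does: if $v$ has degree~$1$ with neighbour $w$, attach new paths of two different lengths $n_1-1$ and $n_2-1$ (with $n_1\ne n_2$ in $A$) between $v$ and $w$, and observe that any amalgam over $H$ would place both new cycles through the edge $vw$, violating~(2).

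The free-cycle case, however, is never actually resolved. Your first attempt (one pendant at $u$ versus two pendants at $u$) fails for the reason you half-noticed: in the amalgam only $u$ itself acquires high degree, so the cycle $C$ still carries at most two vertices of degree $>2$ (namely $u$ and the possibly pre-existing high-degree vertex), and condition~(3) is not violated. Your fallback plan of reducing to the degree-one case by finding an edge outside every cycle also breaks down precisely when $H'$ \emph{is} a single cycle, and your patch of ``attaching a pendant edge first to create such an $e$'' does not work: that pendant edge is not in $H'$, so in an amalgam over $H'$ the two copies of the pendant need not be identified, and the two new cycles you build through it need not share any edge at all.

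The paper handles the free-cycle case directly and cleanly using condition~(3) rather than~(2). Since $C$ has at least three vertices and at most one of degree $>2$, choose distinct $a,b,c\in C$ with $a$ the high-degree vertex if there is one. Let $G_{a,b}$ add one new pendant at $a$ and one at $b$; let $G_c$ add one new pendant at $c$. Both lie in $\Gee_A$ (each has at most two high-degree vertices on $C$), but any amalgam over $H$ has all three of $a,b,c$ of degree $>2$ on $C$, contradicting~(3). This is the missing idea: spread the new pendants over \emph{three distinct} cycle vertices, split between the two extensions.
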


\begin{pf}
	Fix $G \in \Gee_A$ and assume it is non-discrete. Let $n$ and $m$ be two distinct elements of $A$. If $G$ contains a vertex $v$ of degree $1$, let $w$ be its neighbor, and note that the edge between $v$ and $w$ is not contained in any cycle. Consider the two extensions $G_n$ and $G_m$ of $G$ obtained by adjoining new paths from $v$ to $w$ of lengths $n-1$ and $m-1$, respectively. Then $G_n$ and $G_m$ are both in $\Gee_A$ by Lemma~\ref{lemma:newcycle}. But $G_n$ and $G_m$ cannot be amalgamated over $G$, since in any such amalgam, the two cycles would both contain the original edge between $v$ and $w$, violating (2).
	
	Now suppose $C \subs G$ is a free cycle. $C$ contains at least three vertices, at most one of which has degree $>2$. So let $a$, $b$, and $c$ be distinct elements of $C$ such that $a$ has degree $>2$ if there is such an element in $C$. Let $G_{a,b}$ be the extension of $G$ obtained by adding two new vertices, adjacent to $a$ and $b$, respectively. Let $G_c$ be the extension of $G$ obtained by adding a new vertex adjacent to $c$. Then $G_{a,b}$ and $G_c$ are in $\Gee_A$, but in any amalgamation of these two graphs over $G$, all three of $a$, $b$, and $c$ would have degree $>2$, violating (3).
	
		It follows that for any non-discrete $H\in \mathcal{G}_A$, there is no extension $H\subseteq H'\in \mathcal{G}_A$ that serves as a witness for CAP. 
\end{pf}

\begin{prop}
	$\Gee_A$ has the weak amalgamation property.
\end{prop}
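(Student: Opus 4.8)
The plan is to mirror the proof of WAP for $\Gee$ in Section~\ref{sec:KK}, replacing the notion of a \emph{tame} graph with an analogous notion adapted to the constraints defining $\Gee_A$. I would first isolate the obstruction to amalgamation: as in the CAP proof, the only way a free amalgam over a subgraph $H'$ can leave $\Gee_A$ is if the amalgamation creates a new cycle (impossible if $H'$ is connected, since all cycles of a free amalgam over a connected base lie in one factor), or if some vertex $u\in H'$ of degree $\loe 2$ in each factor acquires degree $>2$ in the amalgam and this violates condition (3) for some cycle through $u$, or (from the other side) if a cycle $C\subs H'$ that is ``borderline'' for (3) acquires an extra high-degree vertex. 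So the right definition of tameness should guarantee that (a) $H'$ is connected with enough vertices, (b) every vertex of $H'$ of degree $2$ that lies on a cycle of $H'$ is ``blocked'' — it already has a neighbor, on that cycle, of degree $>2$, so it cannot be allowed to grow, but in fact we will arrange that such vertices do grow in the padded extension $G$ and are thus forced into the factors — and (c) every cycle of $H'$ already contains its full quota of $2$ high-degree vertices, or else is padded so that it does.

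Concretely, I would prove first that every $H\in\Gee_A$ extends to a \emph{tame} $H'$, where tameness asks: $H'$ is connected and large; each vertex of degree $1$ has its unique neighbor of degree exactly $2$ (as in Section~\ref{sec:KK}); each vertex of degree $2$ that lies on no cycle has a neighbor of degree $>2$; and each cycle of $H'$ contains exactly two vertices of degree $>2$. The extension is built in stages, exactly paralleling the tameness argument for $\Gee$: connect components (adding vertices adjacent to degree-$\loe 1$ vertices), then for each cycle $C$ with fewer than two high-degree vertices use Lemma~\ref{lemma:newcycle}'s companion reasoning — actually here one simply attaches new pendant vertices at two chosen vertices of $C$ (possible since $|C|\goe 3$ and $|C|\in A\subs\nat\setminus 3$ so $|C|\goe 4$ if needed, but $3\notin A$ forces cycles to have length $\goe 4$... in fact $A\subs\nat\setminus 3$ means $3\notin A$, so every cycle has $\goe 4$ vertices) to raise its count of high-degree vertices to exactly two; then fix degree-$2$ and degree-$1$ vertices by attaching pendants, checking as before that each step strictly decreases a suitable counting function while preserving what was already achieved. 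Then I would define the padded graph $G\sups H'$ by adjoining, for each vertex $u\in H'$ that is ``at risk'' (degree $1$, or degree $2$ on no cycle, or on a cycle — i.e.\ essentially every vertex that tameness left at degree $\loe 2$), two new pendant vertices $a_u,b_u$ at $u$, so that $\deg_G(u)\goe 3$; one checks $G\in\Gee_A$ precisely because tameness guarantees $u$'s cycles already have two high-degree vertices (so raising $\deg(u)$ does not touch (3) for those cycles — wait, it could if $u$ is itself newly high-degree on such a cycle, so I must be careful to pad only vertices whose cycles already count $u$, or already have two \emph{other} high vertices) and $u$'s degree-$1$ neighbors are unaffected.

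Finally, given embeddings $\map f G X$ and $\map g G Y$ with $X,Y\in\Gee_A$, I take $W$ the free amalgam of $X$ and $Y$ over $H'$ (assuming $X\cap Y=H'$ with $f,g$ inclusions on $H'$), with inclusions $f',g'$; then $f'\cmp f\rest H = g'\cmp g\rest H$ is immediate. To see $W\in\Gee_A$: connectivity of $H'$ forces every cycle of $W$ into $X$ or into $Y$, so (1) and (2) hold; for (3), suppose a cycle $C\subs W$, say $C\subs X$, has three vertices of degree $>2$ in $W$. Since $X\in\Gee_A$, at most two of them have $X$-degree $>2$, so some $u\in C\cap H'$ has $\deg_X(u)\loe 2$ but $\deg_W(u)>2$, forcing $u$ to have a neighbor in $Y\setminus H'$ and hence $\deg_{H'}(u)\loe\deg_X(u)\loe 2$ — but tameness ensures either $u$ has degree $1$ in $H'$ (whence $G$, embedded in $X$, already gave $u$ the pendants $a_u,b_u$, so $\deg_X(u)\goe 3$, contradiction) or $u$ has degree $2$ in $H'$ lying on cycle $C$, and then tameness says $C$ already contains two high-degree vertices of $H'$ distinct from $u$, and the padded $G$ raised $u$'s degree above $2$ already inside $X$, contradiction again. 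I expect the main obstacle to be getting the definition of ``tame'' and the padding set exactly right so that the three conditions of $\Gee_A$ are all robust under free amalgamation simultaneously — in particular, ensuring that padding a vertex $u$ on a cycle $C$ does not itself create a $(3)$-violation in $G$, which requires tameness to have pre-loaded $C$ with its two high-degree vertices chosen disjoint from the vertices being padded, or to pad at exactly those two vertices; reconciling this with the degree-$1$ and degree-$2$ clauses is the delicate bookkeeping.
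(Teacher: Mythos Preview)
There is a genuine gap. Your argument hinges on the claim that ``connectivity of $H'$ forces every cycle of $W$ into $X$ or into $Y$.'' That was true in Section~\ref{sec:KK} only because every graph in $\Gee$ is acyclic: a free amalgam of forests over a connected tree is again a forest. In $\Gee_A$ the graphs contain cycles, and connectivity of the base no longer suffices. For a concrete failure, take any tame $H'$ (in your sense) containing an edge $e=\{u,v\}$ not lying on any cycle of $H'$; your conditions do not exclude this (for instance the pendant edges you attach to satisfy the degree-$1$ clause are such edges). Then $X$ can adjoin a new path from $u$ to $v$ of length $n-1$ for some $n\in A$ (this is exactly Lemma~\ref{lemma:newcycle}), and $Y$ can adjoin a different such path. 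In the free amalgam $W$ over $H'$, both new cycles pass through the edge $e$, so $W$ violates~(2); and the cycle formed by concatenating the two new paths may have length outside $A$, violating~(1). Neither your tameness conditions nor your padding $G$ obstructs these extensions, so the amalgam can leave $\Gee_A$.

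The paper's proof avoids this by first extending to an $H$ in which \emph{every edge lies on a cycle}. This buys a strong rigidity property: in any $X\sups H$ with $X\in\Gee_A$, every path in $X$ between two vertices of $H$ already lies in $H$, because otherwise such a path together with a path in $H$ would produce a cycle sharing an edge with one of $H$'s cycles, contradicting~(2). From this it follows that any cycle in $W$ meeting $H$ in more than one vertex is contained in $H$, and hence every cycle of $W$ lies entirely in $X$ or entirely in $Y$. Only after this is established does the paper pad (adding pendants to kill free cycles, obtaining $H'\sups H$) and then amalgamate freely over $H$. Your tameness conditions, being modeled on the acyclic case, do not secure this path-containment property, and without it conditions~(1) and~(2) are not preserved. (Incidentally, $A\subs\nat\setminus 3$ means $A\subs\{3,4,5,\dots\}$, so $3$ may well belong to $A$; but this is a side issue.)
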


\begin{pf}
	Fix $G \in \Gee_A$. We first extend $G$ to a graph $H \in \Gee_A$ which is connected and non-discrete, and such that every edge in $H$ is contained in a cycle.
	
	By Lemma~\ref{KlejmDAFsf}, each connected component of $G$ contains either a free cycle or a vertex of degree at most $1$. From each component, choose a single vertex which either has degree at most $1$ or has degree $2$ and is contained in a free cycle. Add a single new vertex which is adjacent to each of these chosen vertices. The result is a connected non-discrete graph in $\Gee_A$ containing $G$. Then by repeatedly apply Lemma~\ref{lemma:newcycle}, we can further extend to a graph $H\sups G$ such that every edge in $H$ is contained in a cycle. 
	
	The key property of the graph $H$ is that in any extension $X\sups H$ with $X\in \Gee_A$, any path $p$ in $X$ between distinct vertices $v$ and $w$ in $H$ in already contained in $H$. Indeed, since $H$ is connected, there is some path $q$ from $v$ to $w$ in $H$. If the path $p$ contains an edge which is not in $H$, then there is some cycle $C$ in $X$, consisting of a nonempty segment of $p$ and a nonempty segment of $q$, which is not contained in $H$. But any edge in $q$ is contained in some cycle $C'$ in $H$, so the cycles $C$ and $C'$ share an edge, contradicting (2).  
	
	Now we find a witness for WAP. For each free cycle in $H$, choose one vertex $v$ of degree $2$ in that cycle and add a new vertex adjacent to $v$. If $H$ contains a free cycle with no vertices of degree $>2$ (which only happens if $H$ is itself a cycle), choose two vertices in this cycle and add new vertices adjacent to each of them. Call the resulting graph with no free cycles $H'$. 
	
	Suppose $f\colon H'\to X$ and $g\colon H'\to Y$ are embeddings, with $X,Y\in \Gee_A$. We may assume that $X\cap Y = H$, and that $f\restriction H$ and $g\restriction H$ are inclusions. Let $W = X\cup Y$, with no new edges between vertices in $X\setminus H$ and $Y\setminus H$ (so $W$ is the free amalgamation of $X$ and $Y$ over $H$). It remains to show that $W\in \Gee_A$. 
	
	We observe first that any cycle $C\subseteq W$ which contains more than one vertex in $H$ is contained in $H$. Since there are no edges in $W$ between $X\setminus H$ and $Y\setminus H$, any such cycle $C$ is a union of paths, each contained in $X$ or contained in $Y$, between distinct vertices in $H$. By the key property of $H$ noted above, each of these paths is contained in $H$, so $C\subseteq H$. It follows from this that for any cycle $C\subseteq W$, we have $C\subseteq X$ or $C\subseteq Y$, since $W$ is a free amalgam over $H$. 
	
	From the latter observation, (1) is satisfied in $W$. For (2), note that if two cycles $C$ and $C'$ in $W$ share an edge $e$ between $v$ and $w$, then (without loss of generality) $C\subseteq X$, $C'\subseteq Y$, and $v,w\in H$. But then by the observation above, $C\subseteq H$ and $C'\subseteq H$, contradicting (2) in $H$. 
	
	Finally, for (3), suppose that some cycle $C$ in $W$ contains more than two vertices of degree greater than $2$. We may assume $C\subseteq X$, so there is some vertex $v$ in $C$ such that $\deg_W(v) > 2$ but $\deg_X(v) \leq 2$. Then $v$ is adjacent to a vertex in $Y$, so $v\in H$. Now $H$ is connected and non-discrete, and every edge is part of a cycle, so every vertex in $H$ has degree at least $2$. It follows that $\deg_H(v) = 2$, so $v$ is not adjacent to any vertex in $X\setminus H$. Thus $C$ contains more than one vertex in $H$, so $C\subseteq H$. Since there are no free cycles in $H'$, there are vertices $a$ and $b$ in $C$, distinct from $v$, such that $\deg_{H'}(a)>2$ and $\deg_{H'}(b)>2$. But $H'$ embeds in $Y$ over $H$, so $a$ and $b$ are  two vertices in $C$ with degree $>2$ in $Y$. By (3), we must have $\deg_Y(v) = 2$, contradiction. 
	\end{pf}

\begin{tw}
	There are continuum many different hereditary classes of finite graphs satisfying JEP and WAP but not CAP.
	\end{tw}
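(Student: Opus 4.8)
The plan is to deduce the theorem from the two preceding propositions (that each $\Gee_A$ has WAP but not CAP) together with the observation already recorded that each $\Gee_A$ is hereditary and has JEP. What remains is purely bookkeeping: we must exhibit continuum many \emph{pairwise distinct} classes among the family $\setof{\Gee_A}{A \subs \nat\setminus 3,\ |A|\goe 2}$. Since there are continuum many subsets $A$ of $\nat\setminus 3$ with $|A|\goe 2$, it suffices to show that the assignment $A \mapsto \Gee_A$ is injective, or at least $2^{\aleph_0}$-to-something-small; in fact injectivity is easy and cleanest.

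First I would recover $A$ from $\Gee_A$. For each integer $n\goe 3$, let $C_n$ denote the $n$-cycle (as an abstract finite graph). By condition (1) in the definition of $\Gee_A$, if $n\in A$ then $C_n\in\Gee_A$, since $C_n$ is a single cycle of length $n\in A$, has no two cycles sharing an edge (there is only one cycle), and has no vertex of degree $>2$ at all, so (2) and (3) hold vacuously. Conversely, if $n\notin A$ then $C_n\notin\Gee_A$, because $C_n$ contains a cycle of length $n$, violating (1). Hence
$$A = \setof{n\in\nat\setminus 3}{C_n \in \Gee_A},$$
so $\Gee_A$ determines $A$, and $A\mapsto\Gee_A$ is injective.

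Then I would assemble the conclusion: the map $A\mapsto\Gee_A$ is an injection from the collection of subsets $A\subs\nat\setminus 3$ with $|A|\goe 2$ — a collection of cardinality $2^{\aleph_0}$ — into the collection of hereditary classes of finite graphs. By the displayed remark after Definition~\ref{DfWAPs} and the explicit checks in Section~\ref{sec:KK2}, every such $\Gee_A$ is hereditary and has JEP, has WAP (by the second Proposition of this section), and fails CAP (by the first Proposition of this section). Therefore $\setof{\Gee_A}{A\subs\nat\setminus 3,\ |A|\goe 2}$ is a continuum-sized family of pairwise distinct hereditary classes of finite graphs, each satisfying JEP and WAP but not CAP, which is exactly the assertion of the theorem.

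I do not anticipate a genuine obstacle here; the only point requiring a moment's care is making sure the distinguishing invariant is chosen among objects that actually lie in $\Gee_A$ — the bare $n$-cycle works precisely because it has no vertices of degree exceeding $2$, so conditions (2) and (3) impose nothing. One should also note in passing that $|A|\goe 2$ is needed only so that the earlier propositions apply (the failure of CAP used two distinct cycle lengths in $A$); it plays no role in the counting argument beyond ensuring the index set still has size continuum, which it plainly does since $\nat\setminus 3$ is infinite.
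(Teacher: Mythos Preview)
Your proof is correct and follows the same approach as the paper: invoke the earlier results that each $\Gee_A$ is hereditary with JEP, has WAP, and fails CAP, and then observe that $A \mapsto \Gee_A$ is injective so that continuum many choices of $A$ yield continuum many classes. The paper's proof simply asserts $\Gee_A \ne \Gee_B$ whenever $A \ne B$ without elaboration; your use of the $n$-cycle $C_n$ to recover $A$ from $\Gee_A$ is exactly the natural way to justify this.
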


\begin{pf}
	For each $A \subs \nat \setminus 3$ with $|A|\geq 2$, the class $\Gee_A$ gives an example, and $\Gee_A \ne \Gee_B$ whenever $A \ne B$.
\end{pf}

\section{Generic limits and Pouzet's example}\label{sec:pouzet}

We conclude by describing an example from~\cite{Pabion}, which Pabion attributes to M.\ Pouzet. This gives an example of a class with WAP but not CAP with a countably categorical generic limit, and it answers a recent question of Ahlman. 

Rather than working with this class directly, it is easier to describe its generic limit. For that reason, we take a moment to define the homogeneity properties alluded to in the introduction. 

\begin{df}
Let $M$ be a countable structure. 
\begin{itemize}[itemsep=0pt]
    \item $\Age{M}$ is the class of finitely generated structures which embed in $M$. 
    \item $M$ is \emph{weakly homogeneous} if for every finitely generated substructure $A\subseteq M$, there exists a finitely generated substructure $A\subseteq B\subseteq M$, such that for any embedding $f\colon B\to M$, $f\restriction A$ extends to an automorphism $\sigma\colon M\to M$. 
    \item $M$ is \emph{cofinally homogeneous} if we also require that $\sigma$ extends $f$ in the definition above. 
    \item $M$ is \emph{homogeneous} if we also require that $A = B$ in the definition above. 
\end{itemize}
\end{df}

\begin{prop}[cf. \cite{KKgames}, \cite{KruckPhD}]\label{prop:homogeneity}
Suppose $\Kay$ is a hereditary class of finitely generated structures with JEP which is countable up to isomorphism (i.e. has countably many isomorphism types).
\begin{itemize}[itemsep=0pt]
    \item $\Kay$ has WAP if and only if there exists a countable structure $M$ which is weakly homogeneous and has $\Age{M} = \Kay$. Moreover, $M$ is unique up to isomorphism. In this case, we call $\Kay$ a \emph{weak \fra\ class} and call $M$ the \emph{generic limit} of $\Kay$. 
    \item $\Kay$ has CAP if and only if its generic limit is cofinally homogeneous.
    \item $\Kay$ has AP if and only if its generic limit is homogeneous. In this case we call $\Kay$ a \emph{\fra\ class} and call $M$ its \emph{\fra\ limit}.  
\end{itemize}
\end{prop}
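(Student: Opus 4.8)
The plan is to route all three equivalences through a single combinatorial object — a \fra-style chain — and to read off the three grades of homogeneity from three graded strengthenings of one ``amalgamation along the chain'' condition. Call an increasing $\omega$-chain $M_0 \subs M_1 \subs \cdots$ of members of $\Kay$, with union $M$, a \emph{weak \fra\ chain} if: (cof) every $X \in \Kay$ embeds into some $M_n$; and (abs) for every $n$ there is $m \goe n$ such that for every embedding $\map g {M_m} Y$ with $Y \in \Kay$ there are $k \goe m$ and an embedding $\map h Y {M_k}$ with $h \cmp g \rest M_n$ equal to the inclusion $M_n \subs M_k$. The decisive feature is that in (abs) the triangle need only commute over the \emph{earlier} term $M_n$; this is precisely the asymmetry present in WAP and in weak homogeneity, where only $f \rest A$, and not $f$, is required to extend.

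Step 1 (existence). Assuming WAP, I would build a weak \fra\ chain by bookkeeping. Countability up to isomorphism makes the collection of relevant tasks (embeddings out of the finitely generated $M_m$) countable; JEP together with HP secures (cof); and the extension $Z \subs Z'$ guaranteed by WAP — inserted into the chain as $M_n \subs M_m$ and then amalgamated forward — secures (abs). By (cof) we get $\Kay \subs \Age{M}$, and by HP the reverse inclusion, so $\Age{M} = \Kay$.

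Step 2 (back-and-forth). This is the technical heart and the step I expect to be the main obstacle. Given two weak \fra\ chains with unions $M$ and $M'$, I would intertwine them into a zigzag of embeddings $M_{n_0} \to M'_{n'_0} \to M_{n_1} \to M'_{n'_1} \to \cdots$ whose consecutive composites agree with the bonding maps \emph{after restriction to the preceding term}, each step being fed by (abs). Since every element of $M$ and of $M'$ is eventually captured, and every triangle relevant to it eventually commutes on it, the two directions glue in the union to mutually inverse isomorphisms; this proves uniqueness up to isomorphism. A one-sided version of the same argument — running both chains inside the single structure $M$ and starting from a prescribed partial isomorphism supported on a witness term $M_m$ — yields weak homogeneity of $M$. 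Conversely, a weakly homogeneous $M$ with $\Age{M} = \Kay$ forces WAP: embedding a given $Z \in \Kay$ into $M$ and pulling back the homogeneity-witness $B \sups Z$ produces exactly a WAP-witness $Z'$ over $Z$, since the defining diagram can be amalgamated inside $M$. This settles the first bullet.

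Step 3 (CAP and AP). For the two remaining bullets I would strengthen (abs) in two graded ways and check that Steps 1 and 2 carry over verbatim. Demanding that the triangle in (abs) commute over the witness term $M_m$ itself, rather than only over $M_n$, is the chain-level form of CAP; the zigzag then extends each embedding $\map f B M$ together with its action on all of $B$, i.e.\ yields cofinal homogeneity, and a cofinally homogeneous limit returns a CAP-witness just as in Step 2. Demanding commutativity already at each single stage is the classical \fra\ condition coming from AP, and produces a fully homogeneous limit — the ordinary \fra\ limit — with the converse being the standard fact that the age of a homogeneous structure has AP. The only additional care is bookkeeping: the cofinal and full versions call for the stronger amalgamation witnesses at the matching stages, which CAP and AP respectively provide, and one checks that inserting these witnesses does not disturb (cof).
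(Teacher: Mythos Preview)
The paper does not supply its own proof of this proposition; it is stated with the attribution ``cf.\ \cite{KKgames}, \cite{KruckPhD}'' and the text resumes immediately with ``We now describe the example.'' So there is nothing in the paper to compare your argument against line by line.

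That said, your plan is correct and is essentially the approach taken in the cited references. The notion you call a \emph{weak \fra\ chain} is precisely the ``weak \fra\ sequence'' of \cite{KKgames}: the crucial asymmetry in your condition (abs) --- that the absorption triangle need only commute over the earlier term $M_n$ rather than over the witness term $M_m$ --- is exactly what separates the weak theory from the classical one, and your back-and-forth in Step~2 exploits it in the expected way. The converse in Step~2 (weakly homogeneous $M$ with $\Age{M}=\Kay$ forces WAP) is also right, though you might make explicit that after embedding $X$ and $Y$ into $M$ one composes with the inverses of the two automorphisms furnished by weak homogeneity to force agreement on $Z$, and then cuts down to a finitely generated $W\subs M$. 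Step~3 is the correct observation that CAP and AP are recovered by tightening (abs) to commute over $M_m$, respectively over every stage; the bookkeeping concern you flag is genuine but routine.
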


Generic limits of weak \fra\ classes can also be described in terms of a natural infinite game (called the \emph{Banach-Mazur game} in \cite{KKgames}).
Namely, given a class $\Kay$ and a structure $M$ as above, two players play by alternately choosing bigger and bigger elements of $\Kay$, thus building a chain
$$E_0 \subs E_1 \subs \cdots,$$
 where each inclusion $E_n \subs E_{n+1}$ is an embedding. The second player wins if $\bigcup_{\ntr}E_n$ is isomorphic to $M$.
It turns out that $M$ is the generic limit of $\Kay$ if and only if the second player has a winning strategy in this game, see~\cite{KKgames}.

For instance, if $\Kay$ is the \fra\ class of all finite linearly ordered sets then the winning strategy is very simple and does not even depend on the history: Given a finite linear order $X$, it is enough to add a new point between every two consecutive ones, plus one more point below the minimum and one more above the maximum. By this way, after infinitely many moves the resulting linear order will be dense and with no end points, therefore isomorphic to $(\Qyu, <)$.

Another natural example is the class $\Tee$ of all finite cycle-free graphs. The generic limit is the countable everywhere inifnitely branching tree $\T$, namely, a countable connected cycle-free graph whose each vertex has infinite degree.

The generic limits of our examples are described below.

\begin{ex}
	(a) The generic limit of the class $\Kay$ from Section~\ref{sec:alex} can be obtained from the unique everywhere infinitely branching tree $\T$, by adding labels to all the vertices in such a way that the prohibited configurations do not show up, while at the same time all other configurations appear everywhere. The Banach-Mazur game mentioned above can help here. Indeed, given $E_n \in \Kay$, the second player can always make it connected and can make sure the degree of each vertex of $E_n$ is at least $n$, by adding suitable labels.
	
	(b) The generic limit of the class $\Pee$ from Section~\ref{sec:aris} can be obtained from the tree $\T$ by first turning edges into arrows so that both the source and target degree of each vertex is infinite; next add the ``colors'' ($S$ or $T$) so that condition (P3) is satisfied and both ``colors'' occur infinitely many times in the neighborhood of each vertex.
	
	(c) The generic limit of the class $\Gee$ from Section~\ref{sec:KK} has a particularly nice description. Start with the unique everywhere infinitely branching countable tree $\T$, as above. Pick a subset $X$ of the edges of $\T$ such that for every vertex $v\in \T$, infinitely many edges out of $v$ are in $X$ and infinitely many are not in $X$. Now subdivide each edge $\{v,w\}$ in $X$ into two pieces: $$\begin{tikzcd}
		v \ar[r, dash] & \bullet \ar[r, dash] & w
	\end{tikzcd}$$ and subdivide each edge $\{v,w\}$ not in $X$ into three pieces: $$\begin{tikzcd}
		v \ar[r, dash] & \bullet \ar[r, dash] & \bullet \ar[r, dash] & w
	\end{tikzcd}$$
	The resulting tree is the generic limit of $\mathcal{G}$.
	
	A suitable modification, involving cycles, leads to the description of the generic limit of $\Gee_A$ (see Section~\ref{sec:KK2}). We leave the details to interested readers.
\end{ex}

\separator

We now describe Pouzet's example. Let $R$ be the ternary relation on $\mathbb{Q}$ defined by \[R(x,y,z)\iff x<y \land x < z \land y\neq z.\]
Then the structure $(\mathbb{Q},R)$ is interdefinable with the homogeneous structure $(\mathbb{Q},<)$. It will be useful below to observe that the relation $<$ and its complement are both definable in $(\mathbb{Q},R)$ by existential formulas:
\begin{align*}
x<y & \iff \exists z\, R(x,y,z)\\
\lnot (x<y) & \iff x = y\lor \exists z\, R(y,x,z).
\end{align*}

In~\cite{Pabion}, the follow proposition is stated without proof. 

\begin{prop}\label{prop:pouzet}
The structure $(\mathbb{Q},R)$ is weakly homogeneous but not cofinally homogeneous. 
\end{prop}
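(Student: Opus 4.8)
The strategy is to work with the structure $M=(\Qyu,R)$ directly and exploit the fact that, by the existential definability displayed above, the relation $<$ and its negation are both preserved by embeddings of $M$ into itself. Consequently an embedding $\map e M M$ need not be order-preserving on all of $M$, but it must be order-preserving on any finite subset $A$ on which it is injective with respect to the data that $R$ records; the slack is exactly that $e$ can permute an ``$R$-indiscernible'' pair, namely two points $y,z$ both lying above a common point $x$ with no element of $A$ separating them in a way $R$ detects. I would first isolate this phenomenon: given a finite $A\subs\Qyu$, describe precisely which self-embeddings of $A$ extend to self-embeddings of $M$ --- these are the bijections of $A$ that preserve $R$, equivalently preserve $<$ on the ``bottom'' but may swap the top two elements of certain pairs.

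\textbf{Weak homogeneity.} For the positive half I would show that any finite $A\subs\Qyu$ extends to a finite $B$ with $A\subs B\subs\Qyu$ which is ``$<$-rigid relative to $A$'': choose $B$ by adding, below every pair of consecutive elements of $A$ (and below the least element, and between every pair, etc.), a new point, so that in $B$ every two distinct elements $a<a'$ of $A$ are separated by a witness $c\in B$ with $R(c,a,a')$ and $R(c,a',a)$ both detecting the order. Then any embedding $\map f B M$ must restrict on $A$ to an order-isomorphism onto its image (the added separators kill the freedom to swap), and since $(\Qyu,<)$ is homogeneous, $f\rest A$ extends to an automorphism of $(\Qyu,<)=M$. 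This gives weak homogeneity with witness $B$. The routine part here is just the bookkeeping of how many and which auxiliary points to add; I'd state it as a lemma that every finite subset of $\Qyu$ is contained in a finite set over which $M$ is ``order-rigid.''

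\textbf{Failure of cofinal homogeneity.} For the negative half I must produce, for \emph{every} finite $A\subs M$, an embedding $\map f B M$ (for $B$ the purported witness) that does \emph{not} extend $f$ to an automorphism --- or rather, by Proposition~\ref{prop:homogeneity} it suffices to show the generic limit is not cofinally homogeneous, so I show: for every finite $B\subs M$ there is an embedding $\map g B M$ that cannot be extended to an automorphism of $M$. The idea is that $B$ has a maximal element $b$ (or, if $B=\emptyset$, take $B=\set b$), and $M$ admits an embedding fixing $B$ pointwise and sending some point just above $b$ to a point far above $b$ --- more to the point, I exploit that there exist two points $p,q$ above all of $B$ with nothing of $B$ below only one of them, so the self-embedding of $B\cup\set p$ sending $p$ to $q$ is fine, but I can choose $q$ so that the required automorphism would have to move a point of $B$; concretely, pick the embedding that maps a new top point to a value that forces a clash with the order on $B$. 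The cleanest route: show the generic limit $M$ has two distinct points $a,b$ with the same quantifier-free type over $\emptyset$ in the language $\set R$ but lying in different $<$-positions, and an embedding swapping them which no automorphism extends, because an automorphism of $M$ is an automorphism of $(\Qyu,<)$ and hence order-preserving.

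\textbf{Main obstacle.} The genuine difficulty is the cofinal-homogeneity failure: I have to rule out \emph{all} possible witnesses $B$ at once, i.e.\ show that no matter how $B$ is chosen there is a bad embedding of $B$. The key observation that makes this work is that an automorphism of $M$ is precisely an automorphism of $(\Qyu,<)$ (since $<$ is definable in $M$), hence order-preserving, whereas $M$ genuinely admits self-embeddings that are \emph{not} order-preserving --- any self-embedding whose image omits the minimum of a pair but keeps the $R$-relation is an example. So for any finite $B$ I locate a pair of points $\set{y,z}$ with $R(x,y,z)$ for some $x\in B$ (such a configuration can always be arranged inside $B$ after enlarging, or is already present), and the transposition of $y$ and $z$ extends to a self-embedding of $M$ by homogeneity of $(\Qyu,<)$ applied to the two orderings, but reverses the order of $y,z$ and hence cannot extend to any automorphism. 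I expect the write-up to consist of: (i) the definability remarks (already in the excerpt), (ii) the order-rigid-extension lemma for WAP, (iii) the observation that $\aut(M)=\aut(\Qyu,<)$, and (iv) the construction of an order-reversing self-embedding defined on an arbitrary finite set, which kills cofinal homogeneity.
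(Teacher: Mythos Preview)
Your overall architecture is right, and once your ``Main obstacle'' paragraph settles down it lands on the paper's argument for the failure of cofinal homogeneity: for any finite $B$ with $|B|\geq 2$, swap the two largest elements. This is an $R$-automorphism of $B$ (nothing in $B$ lies strictly between them, so the swap preserves all instances of $R$), but it reverses $<$ on that pair and hence cannot extend to an automorphism of $M$, since $\Aut(M)=\Aut(\Qyu,<)$. Your earlier attempts in that section (moving a top point ``far above'', etc.) are detours; the transposition is all that is needed.

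Your weak-homogeneity argument, however, is both overcomplicated and contains a genuine error. You propose building $B$ by adding separating points and claim that a witness $c$ with ``$R(c,a,a')$ and $R(c,a',a)$ both detecting the order'' does the job. But $R(c,a,a')$ and $R(c,a',a)$ are \emph{equivalent} statements (each says $c<a$, $c<a'$, $a\neq a'$); they carry no information about which of $a,a'$ is larger. A between-point construction could be repaired, but it is not what you wrote, and in any case it is unnecessary. The paper takes $B = A \cup \{b\}$ where $b$ is a \emph{single} point above every element of $A$. For distinct $a,a'\in A$ one then has $a<a' \iff R(a,a',b)$, so any $R$-embedding $g\colon B\to M$ satisfies $a<a' \iff R(a,a',b) \iff R(g(a),g(a'),g(b)) \iff g(a)<g(a')$; thus $g\rest A$ is order-preserving and extends to an automorphism of $(\Qyu,<)=M$. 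One new top element suffices; no family of separators and no auxiliary order-rigidity lemma are needed.
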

\begin{proof}
For weak homogeneity, suppose $A$ is a finite substructure of $\mathbb{Q}$. Let $b$ be any element of $\mathbb{Q}$ which is greater than every element of $A$ in the standard order, and let $B = A\cup \{b\}$. 

Let $g\colon B\to \mathbb{Q}$ be an embedding, and let $f = g\restriction A$. Then $f$ is order-preserving, since for any $a,a'\in A$, we have $a<a'$ if and only if $R(a,a',b)$, if and only if $R(f(a),f(a'),g(b))$, if and only if $f(a) < f(a')$. By homogeneity of the structure $(\mathbb{Q},<)$, $f$ extends to an automorphism of $(\mathbb{Q},<)$, which is also an automorphism of $(\mathbb{Q},R)$.

To contradict cofinal homogeneity, it suffices to show that for any finite substructure $B\subseteq \mathbb{Q}$ with $|B|\geq 2$, there is an embedding $f\colon B\to \mathbb{Q}$ which does not extend to an automorphism of $(\mathbb{Q},R)$. 

Enumerate $B$ in increasing order as $b_1<\dots<b_{n-1}<b_n$, and define $f$ by  $f(b_n) = b_{n-1}$, $f(b_{n-1}) = b_n$, and $f(b_i) = b_i$ for all $1\leq i <n-1$. Then $f$ is an embedding, but it does not extend to an automorphism of $(\mathbb{Q},R)$: all such automorphisms are order-preserving, since $(\mathbb{Q},R)$ is interdefinable with $(\mathbb{Q},<)$.
\end{proof}

It is not hard to show that the age of $(\mathbb{Q},R)$ is the class $K$ of finite structures $(X,R)$ such that for all $x,y,z,w,w'\in X$:
\begin{enumerate}[itemsep=0pt]
    \item If $R(x,y,z)$, then $|\{x,y,z\}| = 3$.
    \item If $R(x,y,z)$, then $R(x,z,y)$.
    \item If $R(x,y,w)$ and $R(y,z,w')$, then $R(x,z,w')$. 
    \item If $|\{x,y,z\}| = 3$, then exactly one of $R(x,y,z)$, $R(y,z,x)$, or $R(z,x,y)$ holds. 
\end{enumerate}
Thus it follows from Proposition~\ref{prop:homogeneity} and Proposition~\ref{prop:pouzet} that $\mathcal{K}$ is a class with WAP but not CAP, whose generic limit is the countably categorical structure $(\mathbb{Q},R)$. 

In~\cite{Ahlman}, Ahlman studied homogenizable structures and introduced the notion of a boundedly homogenizable structure. 

\begin{df}
A structure $M$ in a finite relational language is \emph{homogenizable} if there is a definable expansion $M'$ of $M$ by finitely many new relation symbols, such that $M'$ is homogeneous. $M$ is \emph{boundedly homogenizable} if it is homogenizable and for every finite tuple $a$ from $M$, there exists a finite tuple $b$ from $M$ such that $\mathrm{tp}(ab)$ is isolated by a quantifier-free formula.
\end{df}

Ahlman asked (Question 3.3 in~\cite{Ahlman}) if every model-complete homogenizable structure is boundedly homogenizable. The structure $M = (\mathbb{Q},R)$ provides a negative answer to this question.

The proof of Proposition~\ref{prop:pouzet} shows that $(\mathbb{Q},R)$ is \emph{uniformly} weakly homogeneous. That is, there is a function $f\colon \mathbb{N}\to \mathbb{N}$ such that for every finite substructure $A$ with $|A| = n$, there is a substructure $B$ with $|B| \leq f(n)$ which serves as a witness for weak homogeneity over $A$. In this case, we can take $f(n) = n+1$. 

If $M$ is a countable structure in a finite relational language, then $M$ is uniformly weakly homogeneous if and only if $\text{Th}(M)$ is countably categorical and model complete, see~\cite[Proposition 3]{Pabion}. In the case $M = (\mathbb{Q},R)$, it is also easy to argue directly: $M$ is a reduct of with the structure $(\mathbb{Q},R,<)$, whose theory is countably categorical, so $\text{Th}(M)$ is countably categorical. Further, $\text{Th}(\mathbb{Q},R,<)$ has quantifier elimination, and as observed above, the relation $<$ and its complement are both definable in $M$ by existential formulas. It follows that every formula is equivalent modulo \text{Th}(M) to an existential formula, so $\text{Th}(M)$ is model complete. 

The definable expansion $(\mathbb{Q},R,<)$ also shows that $(\mathbb{Q},R)$ is homogenizable. The argument that $(\mathbb{Q},R)$ is not boundedly homogenizable is essentially the same as the proof that this structure is not cofinally homogeneous. For any tuple $a$ containing at least two distinct elements, $\mathrm{tp}(a)$ is not isolated by a quantifier-free formula, since the partial map exchanging the two greatest elements of $a$ preserves the truth of all quantifier-free formulas but does not preserve the truth of the formula $\exists z\, R(x,y,z)$ expressing the order relation.

We end with a question. All of the examples in this note are closely related to trees or orders: a witness to the weak amalgamation property over $A$ must always go ``further out'' (in the tree or in the order). No infinite tree has a countably categorical theory, which suggests the possibility that for a weak \fra\ class with countably categorical generic limit, a failure of CAP must come from a definable order. Recall that a first-order theory $T$ has the \emph{strict order property} if there is a formula $\varphi(x,y)$, where $x$ and $y$ are tuples of variables of the same length, such that in some model $M\models T$, $\varphi(x,y)$ defines a preorder with infinite chains. 

\begin{question}
Suppose $\mathcal{K}$ is a weak \fra\ class without CAP in a finite relational language, and let $M$ be its generic limit. If $\mathrm{Th}(M)$ is countably categorical, does $\mathrm{Th}(M)$ have the strict order property? Equivalently, if $M$ is a countable structure in a finite relational language which is uniformly weakly homogeneous but not cofinally homogeneous, does $\mathrm{Th}(M)$ have the strict order property?
\end{question}

\end{document}